\definecolor{zelena}{rgb}{0,.35,0}
\newtheorem{theorem}{Theorem}       
\newtheorem{proposition}[theorem]{Proposition}           
\newtheorem{lemma}[theorem]{Lemma}
\newtheorem{corollary}[theorem]{Corollary}
\newtheorem{observation}[theorem]{Observation}
\newtheorem{claim}[theorem]{Claim}
\newtheorem{problem}{Problem}
\DeclareMathOperator{\R}{R}
\DeclareMathOperator{\Ro}{\overline{R}}
\DeclareMathOperator{\Rc}{Rc}
\DeclareMathOperator{\Rnoncross}{\overline{Rnc}}
\DeclareMathOperator{\Rg}{Rg}
\DeclareMathOperator{\Es}{ES}
\DeclareMathOperator{\ex}{ex}
\def\inst#1{$^{#1}$}
\begin{document}

\title{Ramsey numbers of ordered graphs\footnote{An extended abstract of this paper appeared in the proceedings of Eurocomb 2015~\cite{bckk15}.} \thanks{The first and the fourth author were supported by the grants SVV-2013-267313 (Discrete Models and Algorithms), GAUK 1262213 of the Grant Agency of Charles University and by the project CE-ITI (GA\v CR P202/12/G061) of the Czech Science Foundation. The third author acknowledges the support from the European Research Council under the European Union's Seventh Framework Programme (FP/2007-2013) / ERC Grant Agreement n. 616787. The fourth author was also supported by ERC Advanced Research Grant no 267165 (DISCONV) and by Swiss National Science Foundation Grants 200021-137574 and 200020-14453.
}
}

\author{
Martin Balko\inst{1} \and Josef Cibulka\inst{2} \and Karel Kr\'al\inst{2} \and Jan Kyn\v{c}l\inst{1, 3, 4}
} 

\date{}

\maketitle

\begin{center}
{\footnotesize
\inst{1} 
Department of Applied Mathematics and Institute for Theoretical Computer Science, \\
Charles University, Faculty of Mathematics and Physics, \\
Malostransk\'e n\'am.~25, 118~00~ Praha 1, Czech Republic; \\
\texttt{balko@kam.mff.cuni.cz, cibulka@kam.mff.cuni.cz, kyncl@kam.mff.cuni.cz}
\\\ \\
\inst{2} 
Department of Applied Mathematics, \\
Charles University, Faculty of Mathematics and Physics, \\
Malostransk\'e n\'am.~25, 118~00~ Praha 1, Czech Republic; \\
\texttt{kralka@kam.mff.cuni.cz}
\\\ \\
\inst{3}
Alfr\'ed R\'enyi Institute of Mathematics, Re\'altanoda u. 13-15, Budapest 1053, Hungary
\\ \ \\
\inst{4}
\'Ecole Polytechnique F\'ed\'erale de Lausanne, Chair of Combinatorial Geometry, \\
EPFL-SB-MATHGEOM-DCG, Station 8, CH-1015 Lausanne, Switzerland\\
}
\end{center}

\begin{abstract}
An \emph{ordered graph} is a pair $\mathcal{G}=(G,\prec)$ where $G$ is a graph and $\prec$ is a total ordering of its vertices.
The \emph{ordered Ramsey number} $\Ro(\mathcal{G})$ is the minimum number $N$ such that every ordered complete graph with $N$ vertices and with edges colored by two colors contains a monochromatic copy of $\mathcal{G}$. 

In contrast with the case of unordered graphs, we show that there are arbitrarily large ordered matchings $\mathcal{M}_n$ on $n$ vertices for which $\Ro(\mathcal{M}_n)$ is superpolynomial in $n$. This implies that ordered Ramsey numbers of the same graph can  grow superpolynomially in the size of the graph in one ordering and remain linear in another ordering.

We also prove that the ordered Ramsey number $\Ro(\mathcal{G})$ is polynomial in the number of vertices of $\mathcal{G}$ if the bandwidth of $\mathcal{G}$ is constant or if $\mathcal{G}$ is an ordered graph of constant degeneracy and constant interval chromatic number.
The first result gives a positive answer to a question of Conlon, Fox, Lee, and Sudakov.

For a few special classes of ordered paths, stars or matchings, we give asymptotically tight bounds on their ordered Ramsey numbers. For so-called monotone cycles we compute their ordered Ramsey numbers exactly. This result implies exact formulas for geometric Ramsey numbers of cycles introduced by K\'arolyi, Pach, T\'oth, and Valtr. 
\end{abstract}

\section{Introduction}
\label{section_intro}

Ramsey's theorem~\cite{rams30} states that for every given graph $G$, every sufficiently large complete graph with edges colored by a constant number of colors contains a monochromatic copy of~$G$. 
We study the analogue of Ramsey's theorem for graphs with ordered vertex sets. 
The concept of ordered graphs appeared earlier in the literature~\cite{klazar04a, klazar04b, milans12, pach06}, but we are not aware of any Ramsey-type results for such graphs except for the case of monotone paths and hyperpaths~\cite{choudum02,elias11,fox12,milans12, moshkovitz12}.

The main goal of this paper is to understand the effects of different vertex orderings on the ordered Ramsey number of a given graph, and to compare the ordered and unordered Ramsey numbers.
We state our results after introducing the necessary notation.
Then we present a few examples that provide the motivation to study ordered Ramsey numbers.

During the preparation of this paper, we learned that Conlon, Fox, Lee, and Sudakov~\cite{conFox14} have independently obtained results that overlap with ours. Ramsey numbers of ordered graphs are
also discussed by Conlon, Fox, and Sudakov in their survey of recent
developments in Ramsey theory~\cite{conFox15}.

Throughout the paper, we omit the ceiling and floor signs whenever they are not crucial.
Unless indicated otherwise, all logarithms in this paper are base 2.

\paragraph{Graphs.}
We consider only finite graphs with no multiple edges and no loops. 
A \emph{coloring} of a graph $G=(V,E)$ is a mapping $f \colon E \to C$ where $C$ is a finite set of \emph{colors}.
Unless specified otherwise, we assume that $C=[c]=\{1,2,\dots,c\}$. 
A coloring with $c$ colors is called a \emph{$c$-coloring}. 
In a 2-coloring of a graph $G$ with colors red and blue, we call a vertex $u$ of $G$ a \emph{red neighbor} (a \emph{blue neighbor}) of a vertex $v$ of $G$ if the edge $uv$ is colored red (blue, respectively). 

Ramsey's theorem states that for given positive integers $c$ and $n$, there is an integer $N$ such that every $c$-coloring of $K_N$ contains a monochromatic copy of~$K_n$.
The minimum such $N$ is called the \emph{Ramsey number} and we denote it by $\R(K_n;c)$. 
Classical results of Erd\H{o}s~\cite{erdos47} and Erd\H{o}s and Szekeres~\cite{ErSz35_a_comb_problem} give the exponential bounds 
\begin{equation}
\label{eq_klasicke_odhady_ramsey} 
2^{n/2} \le \R(K_n;2) \le 2^{2n}.
\end{equation}
Despite many improvements during the last sixty years (see~\cite{conlon09} for example), the constant factors in the exponents have remained the same.

Since every graph on $n$ vertices is contained in $K_n$, we can consider the following generalization of Ramsey numbers.
Let $c$ be a positive integer and let ${G}_1,\dots,{G}_c$ be graphs.
Ramsey's theorem then implies that there exists a smallest number $\R({G}_1,\dots,{G}_c)$ such that every $c$-coloring of a complete graph with at least $\R({G}_1,\dots,{G}_c)$ vertices contains a monochromatic copy of ${G}_i$ in color $i$ for some $i \in [c]$.
The case when all the graphs ${G}_1,\dots,{G}_c$ are isomorphic to ${G}$ is 
called the \emph{diagonal case} and we just write $\R({G};c)$ instead of $\R({G}_1,\dots,{G}_c)$.
We also abbreviate $\R(G;2)$ as $\R(G)$.

We note that the definitions above can be generalized to hypergraphs of uniformity $k$ larger than $2$.
In particular, for a $k$-uniform hypergraph $H$, it is known that its Ramsey number $R_k(H)$ is finite.

\paragraph{Ordered graphs.}
An \emph{ordered graph} $\mathcal{G}$ is a pair $({G},\prec)$ where $G$ is a graph and $\prec$ is a total ordering of its vertex set.
The ordering $\prec$ is called a \emph{vertex ordering} of $G$.
Many notions related to graphs, such as vertex degrees or  a coloring, can be defined analogously for ordered graphs.

For an ordered graph $\mathcal{G}=(G,\prec)$ and its vertices $x,y$, we say that $x$ is a \emph{left neighbor} of $y$ and that $y$ is a \emph{right neighbor} of $x$ if $x$ and $y$ belong to a common edge and $x\prec y$.
We say that two ordered graphs $(G_1,\prec_1)$ and $(G_2,\prec_2)$ are \emph{isomorphic} if $G_1$ and $G_2$ are isomorphic via a one-to-one correspondence $g\colon V(G_1) \to V(G_2)$ that also preserves the orderings; that is, for every $x,y \in V(G_1)$, $x\prec_1 y \Leftrightarrow g(x)\prec_2 g(y)$.
An ordered graph $\mathcal{H}=({H},\prec_1)$ is an \emph{ordered subgraph} of $\mathcal{G}=({G},\prec_2)$, written $\mathcal{H}\subseteq\mathcal{G}$, if ${H}$ is a subgraph of ${G}$ and $\prec_1$ is a suborder of $\prec_2$.

We now introduce Ramsey numbers of ordered graphs, called \emph{ordered Ramsey numbers}.
For given ordered graphs $\mathcal{G}_1,\dots,\mathcal{G}_c$, we denote by $\Ro(\mathcal{G}_1,\dots,\mathcal{G}_c)$ the smallest number $N$ such that every $c$-coloring of $\mathcal{K}_N$ contains, for some $i$, a monochromatic copy of $\mathcal{G}_i$ in color $i$ as an ordered subgraph. 
If all $\mathcal{G}_i$ are isomorphic to $\mathcal{G}$, we write the ordered Ramsey number as $\Ro(\mathcal{G};c)$.
We abbreviate $\Ro(\mathcal{G};2)$ as $\Ro(\mathcal{G})$.
If a coloring $f$ of an ordered graph $\mathcal{G}$ contains no monochromatic copy of $\mathcal{H}$, we say that $f$ \emph{avoids} $\mathcal{H}$.

Up to isomorphism, there is only one ordered complete graph on $n$ vertices, which we denote by $\mathcal{K}_n$. Hence, for arbitrary positive integers $c,r_1,\dots,r_c$ we have $\Ro(\mathcal{K}_{r_1},\dots,\mathcal{K}_{r_c})=\R(K_{r_1},\dots,K_{r_c})$.
Since every ordered graph on $r$ vertices is an ordered subgraph of $\mathcal{K}_r$, we have $\Ro(\mathcal{G}_1,\dots,\mathcal{G}_c) \le \Ro(\mathcal{K}_{r_1},\dots,\mathcal{K}_{r_c})$ where $r_i$ is the number of vertices of $\mathcal{G}_i$.
We have thus proved the following fact.

\begin{observation}
\label{obs_uplnaky}
Let $c$ be an arbitrary positive integer and let $\mathcal{G}_1=(G_1,\prec_1),\allowbreak\dots,\mathcal{G}_c=(G_c,\prec_c)$ be an arbitrary collection of ordered graphs.
Then we have \[\R(G_1,\dots,G_c) \le \Ro(\mathcal{G}_1,\dots,\mathcal{G}_c) \le  \R\left(K_{|V(G_1)|},\dots,K_{|V(G_c)|}\right).\] 
\qedhere
\end{observation}

To study the asymptotic growth of ordered Ramsey numbers, we introduce \emph{ordering schemes} for some classes of graphs.
An ordering scheme is a particular rule for ordering the vertices of the graphs consistently in the whole class. 
For example, a \emph{monotone path} $(P_n,\lhd_{mon})$ is an ordered graph with vertices $v_1 \lhd_{mon} \dots \lhd_{mon} v_n$ and $n-1$ edges, each consisting of two consecutive vertices.
Throughout the paper we use the symbol $\lhd$ instead of $\prec$ to emphasize the fact that the vertex ordering follows some ordering scheme.

For an ordered graph $(G,\prec)$, we say that a vertex $v$ of $G$ is to the \emph{left} (\emph{right}) of a subset $U$  of vertices of $G$ if $v$ precedes (is preceded by, respectively) every vertex of $U$ in $\prec$.
More generally, for two subsets $U$ and $W$ of vertices of $G$, we say that $U$ is \emph{to the left of $W$} and $W$ is \emph{to the right of $U$} if every vertex of $U$ precedes every vertex of $W$ in $\prec$.
We say that a subset $I$ of vertices of $G$ is an \emph{interval} if for every pair of vertices $u$ and $v$ of $I$ such that $u\prec v$, every vertex $w$ of $G$ satisfying $u\prec w\prec v$ is contained in $I$.

Again, all the definitions, as well as Observation~\ref{obs_uplnaky}, can be extended to $k$-uniform hypergraphs with $k>2$.
In particular, we know that the ordered Ramsey number $\Ro_k(\mathcal{H})$ of an arbitrary ordered $k$-uniform hypergraph $\mathcal{H}$ is finite.

\subsection{Our results}

We are interested in the effects of vertex orderings on the ordered Ramsey numbers of various classes of graphs.
The Ramsey number of a graph and the ordered Ramsey number of its ordering can be asymptotically different: for example, Proposition~\ref{prop_mon_paths} implies that $\Ro((P_n,\lhd_{mon}))$ is quadratic while $\R(P_n)$ is linear~\cite{gerencser67}.

The gap is much wider for hypergraphs.
Let $t_h$ denote the \emph{tower function of height} $h$ defined by $t_1(x)=x$ and $t_h(x)=2^{t_{h-1}(x)}$.
It is known that for positive integers $\Delta$ and $k$, there exists a constant $C(\Delta,k)$ such that if $H$ is a $k$-uniform hypergraph with $n$ vertices and maximum degree $\Delta$, then $\R_k(H) \le C(\Delta,k)\cdot n$~\cite{conlonFox09}.
On the other hand, Moshkovitz and Shapira~\cite{moshkovitz12} showed that for every $k \ge 3$ we have $\Ro_k((P^k_n,\lhd_{mon}))=t_{k-1}(2n-o(n))$, where $(P^k_n,\lhd_{mon})$ is a \emph{$k$-uniform tight monotone hyperpath} on $n$ vertices, a natural generalization of monotone paths to $k$-uniform hypergraphs.

In Section~\ref{section_specific}, we derive lower and upper bounds on ordered Ramsey numbers of a few basic classes of ordered graphs: stars, paths and cycles. First, in Subsection~\ref{subsection_stars} we show that ordered Ramsey numbers of all ordered stars are linear with respect to the number of vertices and at most exponential with respect to the number of colors. 

\begin{theorem}
\label{thmStars}
For all integers $c \ge 2$, $n_1,\dots,n_c \ge 3$, and for every collection of ordered stars $\mathcal{S}_1,\dots,\mathcal{S}_c$ where $n_i$ is the number of vertices of $\mathcal{S}_i$, we have 
\[
\Ro(\mathcal{S}_1,\dots,\mathcal{S}_c) \le 2^c+ 2^{c+1}\cdot\sum_{i=1}^{c}(n_i-3) < 2^{c+1}\cdot\sum_{i=1}^{c}n_i.
\]
\end{theorem}

For certain ordered stars we also provide an almost matching lower bound.
In Subsection~\ref{subsection_paths}, we show an ordering $(P_n,\lhd_{alt})$ of the path $P_n$ whose ordered Ramsey number is linear in $n$.

\begin{proposition}
\label{prop_alt_paths}
For every integer $n>2$, we have \[5\lfloor n/2 \rfloor -4 \le \Ro((P_n,\lhd_{alt})) \le 2n-3+\sqrt{2n^2-8n+11}.\]
\end{proposition}

In Subsection~\ref{subsection_cycles} we study ordered cycles.
First, we compute Ramsey numbers for all possible orderings of $C_4$ (Proposition~\ref{propC4}).
Then we derive an exact formula for ordered Ramsey numbers of monotone cycles. 
A \emph{monotone cycle} $(C_n,\lhd_{mon})$ on $n$ vertices consists of a monotone path $v_1 \lhd_{mon} \dots \lhd_{mon} v_n$ and the edge $v_1v_n$; see part~a) of Figure~\ref{fig6_mon_cycle}.

\begin{theorem}
\label{theorem_Ramsey_cycles}
For all integers $r \ge 2$ and $s \ge 2$ we have \[\Ro((C_r,\lhd_{mon}),(C_s,\lhd_{mon}))=2rs-3r-3s+6.\]
\end{theorem}

As a consequence, we obtain tight bounds for geometric and convex geometric Ramsey numbers of cycles introduced by K\'arolyi et al.~\cite{kar97,kar98}; see Corollary~\ref{cor_Karolyi}.

\begin{figure}
\centering
 \includegraphics{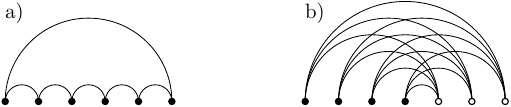} 
\caption{a) The monotone cycle $(C_6,\lhd_{mon})$. b) The ordered complete bipartite graph $\mathcal{K}_{4,3}$.}
\label{fig6_mon_cycle}
\end{figure}

Using a standard probabilistic argument, one can show that there is a constant $c>0$ such that the Ramsey number $\R(G)$ of every graph $G$ with $n$ vertices and $n^{1+\varepsilon}$ edges is at least $2^{cn^{\varepsilon}}$.
On the other hand, it is well-known that if $G$ is an $n$-vertex graph of bounded maximum degree, then $\R(G)$ is linear in $n$~\cite{crst83}.

In a sharp contrast to the latter fact, we construct ordered matchings whose ordered Ramsey numbers grow superpolynomially.

\begin{theorem}
\label{veta_parovani}
There are arbitrarily large ordered matchings $\mathcal{M}$ on $n$ vertices such that 
\[\Ro(\mathcal{M}) \ge n^{\frac{\log{n}}{5\log\log{n}}}.\]
\end{theorem}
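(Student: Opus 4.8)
The plan is to use the standard probabilistic (counting) lower bound: if the number of $2$-colorings of $\mathcal{K}_N$ that avoid $\mathcal{M}_n$ is positive, then $\Ro(\mathcal{M}_n;2) > N$. Equivalently, I would show that for a suitably chosen matching $\mathcal{M}_n$ and $N = n^{\log n/(5\log\log n)}$, the expected number of monochromatic copies of $\mathcal{M}_n$ in a uniformly random $2$-coloring of $\mathcal{K}_N$ is less than $1$. The expected number of monochromatic copies equals (number of ordered-subgraph copies of $\mathcal{M}_n$ in $\mathcal{K}_N$) times $2 \cdot 2^{-n/2}$, since a perfect matching on $n$ vertices has $n/2$ edges and each must receive the fixed color. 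So the crux is to design $\mathcal{M}_n$ so that the number of copies of $\mathcal{M}_n$ inside $\mathcal{K}_N$ is small — much smaller than the trivial bound $\binom{N}{n} \le N^n$, which would only give $N$ of order $2^{1/2}$ and is useless.

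The key idea is therefore to choose the ordered matching $\mathcal{M}_n$ to be \emph{rigid}, in the sense that very few increasing sequences of $n$ vertices of $\mathcal{K}_N$ host a copy of it. Concretely, I would build $\mathcal{M}_n$ recursively/hierarchically so that it contains, on many scales, nested ``separated'' sub-patterns that force a copy to decompose in an essentially unique way. A clean way to get this is to take a matching whose edge-length multiset is very spread out — e.g.\ edges of lengths roughly $1, 2, 4, 8, \dots$ arranged so that the pattern of which vertex is matched to which is recoverable from the positions alone. If the matching is such that knowing the leftmost (say) $k$ vertices of an embedded copy already pins down a constant-factor window for the next vertex, then the number of embeddings into $\mathcal{K}_N$ is at most something like $N \cdot (\text{poly})^{n}$ with a small base, and more cleverly one aims for a bound of the form $N \cdot n^{O(n/\log n)}$. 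Matching this against the $2^{-n/2}$ savings from monochromaticity, we need $N \cdot n^{O(n/\log n)} \cdot 2^{-n/2} < 1$, i.e.\ $N < 2^{n/2} \cdot n^{-O(n/\log n)} = 2^{n/2 - O(n\log n/\log n)} = 2^{n/2 - O(n)}$; that is still only singly exponential, so this naive version is not enough either.

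To actually reach a \emph{super-polynomial-in-$n$} value of $N$ of the stated form $n^{\log n/(5\log\log n)} = 2^{(\log n)^2/(5\log\log n)}$, one must exploit that $N$ is only quasipolynomial in $n$, so $\log N = (\log n)^2/(5\log\log n)$ is tiny compared to $n$. Then the first term contributes $\log N$ to the exponent, and we only need the number of embeddings of $\mathcal{M}_n$ into $\mathcal{K}_N$ to be at most $2^{n/2}\cdot 2^{-\log N}$, i.e.\ we need the ``entropy'' of an embedding to be at most about $n/2$ bits. An embedding is determined by the increasing $n$-tuple of vertices in $[N]$, which a priori costs $n\log N$ bits — far too much. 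The fix is to pick $\mathcal{M}_n$ so that the embedding is determined by many \emph{fewer} than $n$ free choices: design $\mathcal{M}_n$ with a ``scale-free'' hierarchical structure (a balanced recursive construction with branching $\approx \log N$ and depth $\approx n/\log N$... wait, rather: depth $d$ and block size so that $n \approx (\text{something})^d$) so that an embedding is specified by only $O(n/\log n)$ independent coordinates, each chosen from a range of size $N$, giving at most $N^{O(n/\log n)} = 2^{O(n\log N/\log n)} = 2^{O(n\log n/\log\log n \cdot \log n /\log n)}$ — I need to arrange constants so this is $o(2^{n/2})$. Choosing the recursion parameters so that the number of free coordinates is at most $n/(2\log N)\cdot(1-o(1))$ would do it, and the constant $5$ in the exponent is exactly the slack one gets from being a bit wasteful in these estimates.

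The main obstacle, and the heart of the argument, is the combinatorial design of $\mathcal{M}_n$ together with the counting lemma bounding the number of its ordered-subgraph embeddings into $\mathcal{K}_N$: one must exhibit an explicit recursive construction of ordered matchings together with a proof that ``reading off'' a copy from a subset of $[N]$ requires only about $n/\log n$ binary-logarithm-of-$N$ worth of information, i.e.\ a counting bound of the form $\#\{\text{copies of }\mathcal{M}_n\text{ in }\mathcal{K}_N\} \le N^{cn/\log n}$ for a small constant $c$. Once such a construction and bound are in hand, the theorem follows by setting $N = n^{\log n/(5\log\log n)}$ and checking $2 N^{cn/\log n} 2^{-n/2} < 1$, which reduces to the inequality $\tfrac{cn}{\log n}\log N + 1 < \tfrac n2 - 1$, i.e.\ $c \cdot \tfrac{(\log n)^2}{5\log\log n} < \tfrac{\log n}{2}$ up to lower-order terms — true for $c$ small once $\log n / \log\log n \ge 10c/2$, i.e.\ for all large $n$.
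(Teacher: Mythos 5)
Your approach has a fatal flaw at its core: you cannot make the number of ordered-subgraph copies of $\mathcal{M}_n$ in $\mathcal{K}_N$ small by choosing $\mathcal{M}_n$ cleverly, because that number is exactly $\binom{N}{n}$ for \emph{every} ordered graph on $n$ vertices. A copy of $\mathcal{M}_n$ in the complete ordered graph $\mathcal{K}_N$ is determined by an order-preserving injection of the vertex set, and since both orders are total there is exactly one such injection per $n$-element subset of $[N]$; every increasing $n$-tuple hosts a copy because the host is complete. ``Rigidity'' of the matching is therefore irrelevant, and the first-moment bound is $\binom{N}{n}\cdot 2\cdot 2^{-n/2}$, which already exceeds $1$ once $N\geq Cn$. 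This is precisely the content of Proposition~\ref{propBipGenLowBound}: for a graph with $m=n/2$ edges it yields only a \emph{linear} lower bound, and no refinement of the union bound over a uniformly random coloring can do better for matchings. (Your own closing arithmetic also betrays the problem: even granting a bound $N^{cn/\log n}$ on the number of copies, the requirement $c\cdot\frac{(\log n)^2}{5\log\log n}<\frac{\log n}{2}$ is equivalent to $\frac{\log n}{\log\log n}<\frac{5}{2c}$, which fails for all large $n$ at fixed $c$; the inequality you need goes in the opposite direction from the one you checked.)

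The paper's proof is entirely different and non-probabilistic in structure: it recursively constructs \emph{both} the matching and a coloring avoiding it. One fixes $r$, sets $R=\R(K_r;2)-1$, and builds colorings $c_k$ of $\mathcal{K}_{R^k}$ by iterated blow-up of a $K_r$-free coloring of $\mathcal{K}_R$ (the only probabilistic ingredient is the Erd\H{o}s bound $\R(K_r;2)>2^{r/2}$, used to make $R$ exponentially large in $r$). In parallel one builds matchings $\mathcal{M}_k$ by interleaving $r-1$ copies of $\mathcal{M}_{k-1}$ with many shifted copies of a ``split $K_r$'' matching; a pigeonhole argument shows that any placement of $\mathcal{M}_k$ either lands a copy of $\mathcal{M}_{k-1}$ inside one block (induction) or projects a $K_r$ onto the blow-up pattern, and in either case fails to be monochromatic. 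The super-polynomial bound then comes from comparing $n_k\approx r^k$ with $N_k=R^k\approx 2^{rk/2}$. If you want to pursue a probabilistic route, the relevant model (used by Conlon, Fox, Lee and Sudakov) is to randomize the \emph{matching} and argue that almost every ordering works against every coloring --- not to randomize the coloring and take a union bound over copies.
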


Independently, Conlon et al.~\cite{conFox14} showed that a random ordered matching $\mathcal{M}$ on $n$ vertices asymptotically almost surely satisfies $\Ro(\mathcal{M}) \ge n^{\frac{\log{n}}{20\log\log{n}}}$.

In Section~\ref{section_upper_bounds} we give polynomial upper bounds on ordered Ramsey numbers for two classes of sparse ordered graphs. The first class consists of ordered graphs that admit the following decomposition.

For given positive integers $k$ and $q \ge 2$, we say that an ordered graph $\mathcal{G}=(G,\prec)$ is \emph{$(k,q)$-decomposable} if $\mathcal{G}$ has at most $k$ vertices or if it admits the following recursive decomposition: there is a nonempty interval $I\subseteq V(G)$ with at most $k$ vertices such that the interval $I_L$ of vertices of $\mathcal{G}$ that are to the left of $I$ and the interval $I_R$ of vertices of $\mathcal{G}$ that are to the right of $I$ satisfy 
\begin{enumerate}
\item[1)] $|I_L|,|I_R| \le \frac{q-1}{q} \cdot |V(G)|$, 
\item[2)] there is no edge between $I_L$ and $I_R$, and
\item[3)] the induced ordered subgraphs $(G[I_L],\prec \restriction_{I_L})$ and $(G[I_R],\prec \restriction_{I_R})$ are $(k,q)$-decompo\-sable.
\end{enumerate}

\begin{theorem}
\label{veta_rozlozitelne}
Let $k$ and $q\ge 2$ be fixed positive integers.
There is a constant $C'_k$ such that  every $(k,q)$-decomposable ordered graph $\mathcal{G}$ on $n$ vertices satisfies
\[\Ro(\mathcal{G})\le C'_k \cdot n^{128k(q-1)}.\]
\end{theorem}

The constant $C'_k$ depends on $k$ and the proof of Theorem~\ref{veta_rozlozitelne} gives a bound $C'_k \le 2^{O(k\log{k})}$.

We say that the \emph{length of an edge} $uv$ in an ordered graph $(G,\prec)$ is $|i-j|$ if $u$ is the $i$th vertex and $v$ is the $j$th vertex of $G$ in the ordering $\prec$.
The \emph{bandwidth} of $\mathcal{G}$ is the length of the longest edge in $\mathcal{G}$.
Since every ordered graph with bandwidth $k$ is $(k,2)$-decomposable, Theorem~\ref{veta_rozlozitelne} implies the following.

\begin{corollary}
\label{corEdgeLengths}
For every fixed positive integer $k$, there is a constant $C'_k$ such that every $n$-vertex ordered graph $\mathcal{G}$ with bandwidth $k$ satisfies 
\[\Ro(\mathcal{G})\le C'_k \cdot n^{128k}.\] 
\end{corollary}

A graph $G$ is \emph{$k$-degenerate} if there is an ordering $v_1,\dots,v_n$ of its vertices such that every vertex $v_i$ has at most $k$ neighbors $v_j$ in $G$ with $j<i$. 
The \emph{degeneracy} of $G$ is the smallest $k$ such that $G$ is $k$-degenerate.
The degeneracy of an ordered graph $\mathcal{G}=(G,\prec)$ is the degeneracy of the underlying graph $G$.

We give a polynomial upper bound for ordered Ramsey numbers of ordered graphs with constant degeneracy and constant interval chromatic number.
For an ordered graph $\mathcal{G}$, the \emph{interval chromatic number} of $\mathcal{G}$ is the minimum number of intervals the vertex set of $\mathcal{G}$ can be partitioned into such that there is no edge between vertices of the same interval.

\begin{theorem}
\label{thmIntChromDiag}
Every ordered $k$-degenerate graph $\mathcal{G}$ with $n$ vertices and interval chromatic number $p$ satisfies \[\Ro(\mathcal{G}) \le n^{(1+2/k)(k+1)^{\lceil \log{p}\rceil}-2/k}.\]
\end{theorem}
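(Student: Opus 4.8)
The plan is to prove $\Ro(\mathcal{G};2)\le n^{f(p)}$ by induction on $\lceil\log p\rceil$, where $f(p):=(1+2/k)(k+1)^{\lceil\log p\rceil}-2/k$; note that $f$ is exactly the solution of $f(1)=1$ and $f(p)=(k+1)f(\lceil p/2\rceil)+2$ for $p\ge2$. The base case $p=1$ is immediate: an ordered graph of interval chromatic number $1$ has no edges, hence lies in every $n$-vertex ordered complete graph, and $n^{f(1)}=n$.

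For the inductive step I would fix intervals $J_1\prec\cdots\prec J_p$ realizing the interval chromatic number, set $p_1=\lceil p/2\rceil$, and split $V(\mathcal{G})$ into the left block $A=J_1\cup\cdots\cup J_{p_1}$ and the right block $B=J_{p_1+1}\cup\cdots\cup J_p$, with $\mathcal{G}_A=\mathcal{G}[A]$, $\mathcal{G}_B=\mathcal{G}[B]$, and $\mathcal{H}$ the ordered bipartite graph of those edges of $\mathcal{G}$ that run between $A$ and $B$. Then $\mathcal{G}_A$ and $\mathcal{G}_B$ are $k$-degenerate, have at most $n$ vertices and interval chromatic number at most $p_1$, while $\mathcal{H}$ is $k$-degenerate with its two color classes $A\prec B$ forming intervals; also $\lceil\log p_1\rceil=\lceil\log p\rceil-1$. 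Putting $M:=n^{f(p_1)}$, the inductive hypothesis gives $\Ro(\mathcal{G}_A;2),\Ro(\mathcal{G}_B;2)\le M$, and the target quantity factors conveniently as $n^{f(p)}=M^{k+1}n^2$.

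So I would take a $2$-coloring of $\mathcal{K}_N$ with $N=M^{k+1}n^2$, split its vertices into a left interval $X$ and a right interval $Y$, and try to embed $\mathcal{G}$ so that $A$ lands in $X$ and $B$ in $Y$. The first step is to pass to a color-rich substructure: one color, say red, carries at least half of the edges between $X$ and $Y$, and applying dependent random choice to this dense bipartite graph with $k+1$ random samples gives subsets $X'\subseteq X$ and $Y'\subseteq Y$, still of size close to $N$, in which every set of at most $k$ vertices on one side has at least about $N^{2/(k+1)}\ (\ge M)$ common red neighbors on the other side. The use of $k+1$ samples — morally forced because $\mathcal{H}$ is $k$-degenerate and hence $K_{k+1,k+1}$-free, so one can only control common neighborhoods of $(k{+}1)$-element sets — is what produces the factor $k+1$ per recursion level in the exponent. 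The second step is to embed $\mathcal{G}$ greedily along a degeneracy order $v_1,\dots,v_n$, sending each class $J_t$ into a reserved subinterval of $X'$ or of $Y'$: a vertex $v_i$ has at most $k$ already-embedded back-neighbors, and the key accounting fact is that the candidate subinterval of any single vertex is refined at most $k$ times in total (once per back-neighbor). Back-edges internal to $A$ or to $B$ are handled by applying the inductive bound inside $X'$, respectively $Y'$, and crossing back-edges by the red-richness of the host, with everything kept in the single color red.

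The hard part, I expect, is the color coordination together with the ordering constraints. A red copy of $\mathcal{G}_A$ inside $X$, a red copy of $\mathcal{G}_B$ inside $Y$, and a red copy of the crossing graph $\mathcal{H}$ must all be forced to use the \emph{same} color, so three independently obtained monochromatic pieces cannot simply be glued; this is the role of the dependent-random-choice reduction, and making it compatible with the requirement that every reserved subinterval remain nonempty after its at most $k$ refinements — while the subinterval lengths exactly fit the $M^{k+1}n^2$ budget and the split point between $X$ and $Y$ is chosen in accordance with the coloring — is the technical core. One clean way to carry this out is to strengthen the inductive statement so that, for a prescribed color, it produces not a single copy but a sufficiently robust color-rich family of copies on each side, which can then be glued along $\mathcal{H}$.
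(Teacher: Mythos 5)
You have the right recursion and the right exponent bookkeeping (taking $N=M^{k+1}n^2$ with $M=n^{f(\lceil p/2\rceil)}$ solves exactly the recurrence behind $(1+2/k)(k+1)^{\lceil\log p\rceil}-2/k$, and your base case $p=1$ is fine), and you correctly locate the crux: the copies of $\mathcal{G}_A$, $\mathcal{G}_B$ and the crossing graph $\mathcal{H}$ must all come out in the \emph{same} color. But the proposal does not resolve this. The inductive hypothesis, as you state it, only yields a monochromatic copy of $\mathcal{G}_A$ in $X'$ in \emph{some} color; if that color is blue while the dense crossing color between $X$ and $Y$ is red, the pieces do not glue, and nothing in the dependent-random-choice setup prevents this. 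Your closing suggestion to ``strengthen the inductive statement so that, for a prescribed color, it produces a sufficiently robust color-rich family of copies'' is precisely the missing lemma, and it is left unformulated; as written, the inductive step is not a proof.

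The paper resolves the color coordination by proving an off-diagonal strengthening (Theorem~\ref{thmIntChrom}): $\Ro(\mathcal{G},\mathcal{K}_n(p))\le n^{(1+2/k)(k+1)^{\lceil\log p\rceil}-2/k}$, from which the diagonal statement follows since $\mathcal{G}\subseteq\mathcal{K}_n(p)$. The asymmetry does all the work. Lemma~\ref{lemmaIntChrom2} (a direct greedy embedding along a degeneracy order, with no need for dependent random choice) shows that in $\mathcal{K}_N$ one finds either a \emph{blue} copy of $\mathcal{G}$ or a \emph{red} ordered $\mathcal{K}_{t,t}$ with $t=(N/n^2)^{1/(k+1)}$; recursing inside the two classes of that red $\mathcal{K}_{t,t}$ yields, failing a blue $\mathcal{G}$, a red $\mathcal{K}_n(p/2)$ on each side, and these two together with the all-red crossing edges assemble into a red $\mathcal{K}_n(p)\supseteq\mathcal{G}$. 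The ``same color'' problem never arises: every failure to produce a blue $\mathcal{G}$ is converted into red completeness, which is closed under gluing. If you want to rescue your version, replacing your inductive statement by this off-diagonal one is the cleanest way to make your ``robust family'' idea precise.
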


We obtain Theorem~\ref{thmIntChromDiag} in Subsection~\ref{section_proofIntChrom} as a consequence of Theorem~\ref{thmIntChrom}, which is a stronger, asymmetric statement.

Finally, we consider the situation when the given ordered graph $\mathcal{G}$ is fixed and we study the asymptotics of $\Ro(\mathcal{G};c)$ as a function of the number $c$ of colors.
We show that the following dichotomy applies: $\Ro(\mathcal{G};c)$ is either polynomial in $c$ or exponential in~$c$ for every ordered graph~$\mathcal{G}$.
We use $n \cdot \mathcal{K}_{n,n}$ to denote the ordered graph consisting of $n$ disjoint consecutive copies of~$\mathcal{K}_{n,n}$.

\begin{theorem}
\label{thm-dichotomy}
Every ordered graph $\mathcal{G}$ on $n$ vertices satisfies one of the following conditions.
\begin{enumerate}
\item\label{item-dichotomy1} We have $\mathcal{G} \subseteq n\cdot \mathcal{K}_{n,n}$ and $\Ro(\mathcal{G};c) \le (2cn)^{n+1}$.
\item\label{item-dichotomy2} One of the ordered graphs from Figure~\ref{fig_dichotomy} is an ordered subgraph of $\mathcal{G}$ and $\Ro(\mathcal{G};c) > 2^c$.
\end{enumerate}
\end{theorem}

\paragraph{The work by Conlon et al.}
While presenting the results of this paper at the conference Summit 240 in Budapest (2014), we learned about a recent work by Conlon, Fox, Lee, and Sudakov~\cite{conFox14} who independently investigated Ramsey numbers of ordered graphs.
There are some overlaps with our results.

Among many other results, Conlon et al.~\cite{conFox14} proved that as $n$ goes to infinity, almost every ordering $\mathcal{M}_n$ of a matching on $n$ vertices satisfies $\Ro(\mathcal{M}_n)\ge n^{\frac{\log{n}}{20\log\log{n}}}$.
This gives a similar bound as Theorem~\ref{veta_parovani}, where we construct one particular ordered matching on $n$ vertices.

Conlon et al.~\cite{conFox14} also showed that there is a constant $c$ such that every $n$-vertex ordered graph $\mathcal{G}$ with degeneracy $k$ and interval chromatic number $p$ satisfies $\Ro(\mathcal{G})\le n^{ck\log{p}}$. 
This gives a much better estimate than Theorem~\ref{thmIntChromDiag}.

On the other hand, Corollary~\ref{corEdgeLengths} gives a solution to  Problem 6.9 in~\cite{conFox14}.

\subsection{Motivation}
\label{subsection_motivation}

In this subsection we show various examples in which Ramsey-type problems on ordered hypergraphs appear.
The examples consist of both classical and recent results.
Some results and definitions are also used later in the paper.

\paragraph{Erd\H{o}s--Szekeres lemma.}
This well-known fact proved by Erd\H{o}s and Szekeres~\cite{ErSz35_a_comb_problem} states that every sequence of at least $(k-1)^2+1$ distinct integers contains a decreasing or an increasing subsequence of length $k$.
It is easy to see that the bound $(k-1)^2+1$ is sharp.
The Erd\H{o}s--Szekeres lemma has many proofs~\cite{steele95} and it is a special case of a Ramsey-type result for ordered graphs. 

Given a sequence $S=(s_1,\dots,s_N)$ of integers, we construct an ordered graph $(K_N,\prec)$ with vertex set $S$ and the ordering of the vertices given by their positions in $S$.
That is, for $s_i,s_j \in S$, we have $s_i \prec s_j$ if $i<j$.
Then we color an edge $s_is_j$ with $i<j$ red if $s_i < s_j$ and blue otherwise.
Afterwards, red monotone paths correspond to increasing subsequences of $S$ and blue monotone paths to decreasing subsequences of $S$.
The lemma now follows from the following result by Choudum and Ponnusamy~\cite{choudum02} (Milans, Stolee, and West~\cite{milans12} gave a proof in the language of ordered Ramsey theory).

\begin{proposition}[\cite{choudum02}]
\label{prop_mon_paths}
For integers $c,r_1,\dots,r_c \geq 1$,  we have
 \[\Ro((P_{r_1},\lhd_{mon}),\dots\allowbreak,(P_{r_c},\lhd_{mon}))=1+\prod_{i=1}^{c}(r_i-1).\]
\end{proposition}

\paragraph{Erd\H{o}s--Szekeres theorem.} The Erd\H{o}s--Szekeres theorem~\cite{ErSz35_a_comb_problem} was one of the earliest results that contributed to the development of Ramsey theory~\cite{GrNe02_Ramsey_Erdos}.
It states that for every $k \in \mathbb{N}$, there is an integer $\Es(k)$ such that every set of at least $\Es(k)$ points in the plane in \emph{general position} (no three points on a line) contains $k$ points in convex position.

Erd\H{o}s and Szekeres~\cite{ErSz35_a_comb_problem} proved that $\Es(k) \le \binom{2k-4}{k-2}+1$.
Fox, Pach, Sudakov and Suk~\cite{fox12} observed that this bound follows from the equality $\Ro((P^3_k,\lhd_{mon}))= \binom{2k-4}{k-2}+1$.
Moshkovitz and Shapira~\cite{moshkovitz12} discovered a connection between ordered Ramsey numbers of monotone hyperpaths and high-dimensional integer partitions.

\paragraph{Discrete geometry.} 
We arrived at studying Ramsey numbers of ordered graphs while investigating a variant of the Erd\H{o}s--Szekeres problem for so-called 2-page drawings of $K_n$~\cite{chung87}.\footnote{A $2$-page drawing of $K_n$ is a drawing where the vertices of $K_n$ are placed on a horizontal line (the \emph{spine}), the edges between consecutive vertices are subsegments of the spine and each of the remaining edges goes either above or below the spine.
The edges going above the spine are \emph{red} and the edges going below are \emph{blue}.
Let $v_1, \dots, v_n$ be the vertices in the order in which they occur on the spine. 
A triple $v_i$, $v_j$, $v_k$, where $i<j<k$, is colored red if $v_j$ lies below the edge $v_iv_k$ and blue otherwise.
The color of $v_iv_jv_k$ is thus equal to the color of $v_iv_k$. A monochromatic monotone $3$-uniform path corresponds to a monochromatic ($2$-uniform) ordered graph with pairs of vertices $v_i,v_j$ connected if and only if $|i-j|=2$.}
Although this direction of research did not give any interesting results on $2$-page drawings, it lead to Theorem~\ref{veta_rozlozitelne}.

In discrete geometry, \emph{geometric Ramsey numbers}~\cite{cibGao13,kar97, kar98} are natural analogues of ordered Ramsey numbers.
For a finite set of points $P \subset \mathbb{R}^2$ in general position, we denote as $K_P$ the \emph{complete geometric graph on $P$}, which is a complete graph drawn in the plane so that its vertices are represented by the points in $P$ and the edges are drawn as straight-line segments between the pairs of points in $P$.
The graph $K_P$ is \emph{convex} if $P$ is in convex position.
The \emph{geometric Ramsey number} of a graph $G$, denoted by $\Rg(G)$, is the smallest $N$ such that every complete geometric graph $K_P$ on $N$ vertices with edges colored by two colors contains a noncrossing monochromatic drawing of $G$.
If we consider only convex complete geometric graphs $K_P$ in the definition, then we get so-called \emph{convex geometric Ramsey number} $\Rc(G)$.
Note that these numbers are finite only if $G$ is outerplanar and that $\Rc(G) \le \Rg(G)$ for every outerplanar graph $G$.

For the cycles $C_n$ with $n \ge 3$, K\'arolyi et al.~\cite{kar98} showed the upper bound $\Rg(C_n) \le 2n^2-6n+6$ and also observed that $\Rc(C_n) \ge (n-1)^2+1$.

Using Theorem~\ref{theorem_Ramsey_cycles}, we show that the geometric and convex geometric Ramsey numbers of cycles are equal to the ordered Ramsey numbers of monotone cycles; see Corollary~\ref{cor_Karolyi}.
First we observe that the ordered and convex geometric Ramsey numbers of cycles are the same.

\begin{observation}
\label{obsDiscreteGeo}
For every $n \ge 3$, we have $\Rc(C_n) = \Ro((C_n,\lhd_{mon}))$.
\end{observation}

\begin{proof}
Consider a set of $n$ points in convex position.
Order the points $v_1 \prec \dots \prec v_n$ in the clockwise order starting at an arbitrary vertex.
The observation follows from the fact that a cycle with vertex set $\{v_1, \dots, v_n\}$ is noncrossing if and only if it is the monotone cycle with respect to $\prec$.
\end{proof}

We also sketch a connection between ordered Ramsey numbers and convex geometric Ramsey numbers of outerplanar graphs in Section~\ref{section_open_problems}.

\paragraph{Extremal problems on matrices.}
Extremal theory of $\{0,1\}$-matrices~\cite{cib13,fur92} is essentially concerned with ordered Tur\'an numbers of ordered bipartite graphs.
We use some results from this theory in Subsection~\ref{subsection_paths}. 

A $\{0,1\}$-matrix $A$ \emph{contains} an $r \times s$ submatrix $M$ if $A$ contains a submatrix $B$ that has $1$-entries at all the positions where $M$ does.
A matrix $A$ \emph{avoids} $M$ if it does not contain~$M$.
The \emph{extremal function of $M$} is the maximum number $\ex_{M}(m,n)$ of $1$-entries in an $m \times n$ $\{0,1\}$-matrix avoiding $M$.

Let $K_{n_1,n_2}$ be the complete bipartite graph with parts of sizes $n_1$ and $n_2$.
We use $\mathcal{K}_{n_1,n_2}$ to denote the ordering of $K_{n_1,n_2}$ in which the two parts form disjoint intervals such that the interval of size $n_1$ is to the left of the interval of size $n_2$; see part~b) of Figure~\ref{fig6_mon_cycle}.

Let $\mathcal{G}$ and $\mathcal{H}$ be ordered graphs. The \emph{ordered Tur\'an number of $\mathcal{G}$ in $\mathcal{H}$} is the maximum number of edges in an ordered subgraph $\mathcal{H'}$ of $\mathcal{H}$ such that $\mathcal{G}$ is not an ordered subgraph of~$\mathcal{H'}$.

Let $\mathcal{G}=((A \cup B,E),\prec)$, with $|A|=r$ and $|B|=s$, be a a subgraph of $\mathcal{K}_{r,s}$. Then $\mathcal{G}$ corresponds to an $r\times s$ $\{0,1\}$-matrix $M(\mathcal{G})$ where $M(\mathcal{G})_{i,j}=1$ if the $i$th vertex in $A$ and the $j$th vertex in $B$ are adjacent in $\mathcal{G}$, and $M(\mathcal{G})_{i,j}=0$ otherwise.
It is easy to see that the ordered Tur\'an number of $\mathcal{G}$ in $\mathcal{K}_{m,n}$ equals $\ex_{M(\mathcal{G})}(m,n)$.


\section{Ordered Ramsey numbers for specific classes of graphs}\label{section_specific}

In this section we compute ordered Ramsey numbers of certain ordered stars, paths and cycles. 
We compare the formulas and bounds obtained with known Ramsey numbers of the corresponding unordered graphs.


\subsection{Ordered stars}\label{subsection_stars}

A \emph{star} with $n$ vertices is the complete bipartite graph $K_{1,n-1}$.
Ramsey numbers of unordered stars are known exactly~\cite{burr73} and they are given by 
\[\R(K_{1,n-1};c)=\begin{cases}c(n-2)+1 & \text{if } c \equiv n-1 \equiv 0\; (\bmod \; 2), \\c(n-2)+2 & \text{otherwise}.  \end{cases}\]

The position of the central vertex of an ordered star determines the ordering of the star uniquely up to isomorphism.
We use $\mathcal{S}_{r,s}$ to denote the ordered star with $r-1$ vertices to the left and $s-1$ vertices to the right of the central vertex; see Figure~\ref{fig15_stars}.

For $c,r_1,\dots,r_c \ge 2$, computing $\Ro(\mathcal{S}_{1,r_1},\dots,\mathcal{S}_{1,r_c})$ is straightforward.
In the diagonal case, the ordered Ramsey numbers $\Ro(\mathcal{S}_{1,n};c)$ are equal to the Ramsey numbers $\R(K_{1,n-1};c)$ for every $n$ and $c$, if $n$ is even or $c$ is odd.

\begin{figure}
\centering
\includegraphics{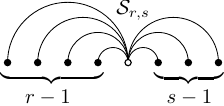} 
\caption{The ordered star $\mathcal{S}_{r,s}$.}
\label{fig15_stars}
\end{figure}

\begin{observation}
\label{obsStars}
For all integers $c,r_1,\dots,r_c \ge 2$ we have 
\[
\Ro(\mathcal{S}_{1,r_1},\dots,\mathcal{S}_{1,r_c}) = 2 + \sum_{i=1}^{c}(r_i-2).
\]
\end{observation}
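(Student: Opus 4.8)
The statement concerns $\mathcal{S}_{1,r_i}$, which is the ordered star $K_{1,r_i-1}$ whose center is the \emph{leftmost} vertex; equivalently, it is a vertex together with $r_i-1$ right-neighbors. I would prove the claimed equality $\Ro(\mathcal{S}_{1,r_1},\dots,\mathcal{S}_{1,r_c})=2(1-c)+\sum_{i=1}^c r_i$ by establishing the lower bound via an explicit coloring and the upper bound via a counting (pigeonhole on out-degrees) argument.

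\emph{Lower bound.} Set $N = 1-c+\sum_{i=1}^c (r_i-1) = 2(1-c)+\sum_i r_i - 1$, and exhibit a $c$-coloring of $\mathcal{K}_N$ with no monochromatic $\mathcal{S}_{1,r_i}$ in color $i$. The obstruction to a copy of $\mathcal{S}_{1,r_i}$ in color $i$ is a vertex with $r_i-1$ out-edges (edges to the right) in color $i$. So I want a coloring in which every vertex $v$ has at most $r_i-2$ right-edges in color $i$, for each $i$. A vertex that is $k$-th from the right has exactly $N-k$ right-edges; the worst case is the leftmost vertex with $N-1 = \sum_i(r_i-1)-c = \sum_i(r_i-2)$ right-edges. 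Thus I can assign, to each vertex, its right-edges greedily so that color $i$ is used at most $r_i-2$ times: process vertices from left to right, and for the right-edges of each vertex distribute the $N-k \le \sum_i (r_i-2)$ of them among the colors respecting the caps $r_i-2$ (this is possible since the total capacity $\sum_i(r_i-2)$ is never exceeded). This gives a valid coloring on $N$ vertices, so $\Ro \ge N+1 = 2(1-c)+\sum_i r_i$.

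\emph{Upper bound.} Let $N = 2(1-c)+\sum_i r_i$ and take any $c$-coloring of $\mathcal{K}_N$. The leftmost vertex $v$ has exactly $N-1 = \sum_i(r_i-2) + 1 = \sum_i\bigl((r_i-1)-1\bigr)+1$ right-edges. By pigeonhole, some color $i$ is used on at least $\lceil (N-1)/c\rceil$ of them; but more to the point, since $N-1 = \sum_i (r_i-2) + 1 > \sum_i (r_i-2)$, it is impossible for every color $i$ to appear at most $r_i-2$ times on the right-edges of $v$. Hence some color $i$ appears at least $r_i-1$ times, i.e. $v$ has $r_i-1$ right-neighbors joined to it in color $i$, which together with $v$ form a monochromatic copy of $\mathcal{S}_{1,r_i}$. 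Therefore $\Ro \le N$.

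\emph{Main obstacle.} There is no real obstacle here — the statement is, as the paper says, "a simple observation." The only point requiring a line of care is the greedy construction in the lower bound: one must check that the capacity constraint $\sum_i(r_i-2)$ is respected uniformly, i.e. that it suffices to bound only the leftmost vertex (all others have strictly fewer right-edges, so the same color assignment scheme a fortiori works). I would phrase the construction cleanly by fixing, once and for all, a multiset $\mathcal{C}$ of colors containing color $i$ exactly $r_i-2$ times, ordering its $\sum_i(r_i-2) = N-1$ elements as $c_1,c_2,\dots,c_{N-1}$, and coloring the edge between the vertex in position $a$ and the vertex in position $b$ (with $a<b$) by $c_{b-a}$; then the right-edges of any vertex receive colors from a prefix (or the appropriate sub-multiset) of $\mathcal{C}$, so color $i$ is used at most $r_i-2$ times at each vertex as a source, and no monochromatic $\mathcal{S}_{1,r_i}$ arises.
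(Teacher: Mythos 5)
Your proof is correct and follows essentially the same approach as the paper: the upper bound by pigeonhole on the right-edges of the leftmost vertex, and the lower bound by capping the number of color-$i$ right-edges at each vertex to $r_i-2$. Your explicit difference-based coloring (assigning color $c_{b-a}$ from a fixed multiset) is just a concrete instance of the paper's generic partition of each vertex's right-neighborhood into parts of size at most $r_i-2$.
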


\begin{proof}
Let $\mathcal{K}_N$ be an ordered complete graph with $N \ge 2 + \sum_{i=1}^{c}(r_i-2)$ vertices and edges colored with $c$ colors.
By the pigeonhole principle, for some $i\in [c]$, the leftmost vertex in $\mathcal{K}_N$ has at least $r_i-1$ incident edges of color $i$.
These edges form a copy of $\mathcal{S}_{1,r_i}$.

The following $c$-coloring of the edges of $\mathcal{K}_N$ with $N\mathrel{\mathop:}=1+\sum_{i=1}^{c}(r_i-2)$ has no star $\mathcal{S}_{1,r_i}$ in color $i$.
Partition all the vertices of $\mathcal{K}_N$ except for the leftmost vertex into $c$ subsets $V_1,\dots, V_c$ such that $|V_i|= r_i-2$.
Then color each edge with its right vertex in $V_i$ by color $i$.
Thus no color $i$ contains a copy of $\mathcal{S}_{1,r_i}$, since otherwise all $r_i-1$ right neighbors of the central vertex of $\mathcal{S}_{1,r_i}$ are in $V_i$, which is impossible.
\end{proof}

Choudum and Ponnusamy~\cite {choudum02} determined the ordered Ramsey numbers of all pairs of ordered stars by the following recursive formulas.

\begin{theorem}[\cite{choudum02}]
\label{thmStarsPair}
For all integers $r_1,r_2 > 2$, we have
\[\Ro(\mathcal{S}_{1,r_1},\mathcal{S}_{r_2,1})=\left\lfloor \frac{-1+\sqrt{1+8(r_1-2)(r_2-2)}}{2} \right\rfloor+r_1+r_2-2.\]
Moreover, for all integers $r_1,r_2,s_1,s_2 \ge 2$, we have
\[\Ro(\mathcal{S}_{1,r_1},\mathcal{S}_{r_2,s_2})=\Ro(\mathcal{S}_{1,r_1},\mathcal{S}_{r_2,1})+r_1+s_2-3\]
and
\[\Ro(\mathcal{S}_{r_1,s_1},\mathcal{S}_{r_2,s_2})=\Ro(\mathcal{S}_{r_1,1},\mathcal{S}_{r_2,s_2})+\Ro(\mathcal{S}_{1,s_1},\mathcal{S}_{r_2,s_2})-1.\]
\end{theorem}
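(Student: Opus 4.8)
The plan is to first reduce everything to the single-colour-on-each-side case and then handle that case by a direct extremal argument, following Choudum and Ponnusamy. The two ``Moreover'' identities are really bookkeeping: they say that extra left-neighbours forced in the first colour, or extra leftover right/left neighbours of the second-colour star, simply add up. So I would begin by establishing the base case $\Ro(\mathcal{S}_{1,r_1},\mathcal{S}_{r_2,1})$ and then derive the other two formulas from it.

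\emph{Base case.} Let $N=\Ro(\mathcal{S}_{1,r_1},\mathcal{S}_{r_2,1})-1$, the largest order of a $2$-coloured $\mathcal{K}_N$ with no red $\mathcal{S}_{1,r_1}$ (no vertex with $r_1-1$ red right-neighbours) and no blue $\mathcal{S}_{r_2,1}$ (no vertex with $r_2-1$ blue left-neighbours). In such a colouring every vertex $v$ has at most $r_1-2$ red right-neighbours and every vertex has at most $r_2-2$ blue left-neighbours; equivalently, counting from the other end, every vertex has at most $r_2-2$ blue left-neighbours, so the blue edges, oriented left-to-right, form a graph of out-degree $\le r_1-2$ and in-degree $\le r_2-2$ when we also record that the non-blue (red) edges have out-degree $\le r_1-2$. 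The key is a double count of the total number of edges $\binom{N}{2}$: each vertex contributes at most $r_1-2$ red edges to its right, and among the blue edges, summing in-degrees gives at most $(r_2-2)N$; but the last few vertices cannot attain the full red out-degree bound, and the first few cannot attain the blue in-degree bound. Carefully optimizing this ``triangular defect'' is exactly what produces the quantity $m:=\big\lfloor (-1+\sqrt{1+8(r_1-2)(r_2-2)})/2\big\rfloor$, which is the largest $m$ with $\binom{m+1}{2}\le (r_1-2)(r_2-2)$. The extremal construction places an initial block and a final block of sizes summing to about $2m$ glued to $r_1+r_2-4$ further vertices, coloured so that red edges go a bounded distance forward and blue edges a bounded distance backward; checking it avoids both stars is a finite verification. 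I expect the lower-bound construction to be the more delicate half.

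\emph{The additive identities.} For $\Ro(\mathcal{S}_{1,r_1},\mathcal{S}_{r_2,s_2})=\Ro(\mathcal{S}_{1,r_1},\mathcal{S}_{r_2,1})+r_1+s_2-3$: the star $\mathcal{S}_{r_2,s_2}$ differs from $\mathcal{S}_{r_2,1}$ by $s_2-1$ extra right-neighbours of its centre. In the upper bound, take an optimal colouring on $\Ro(\mathcal{S}_{1,r_1},\mathcal{S}_{r_2,1})-1$ vertices and prepend $r_1+s_2-3$ vertices; route enough red edges from the new vertices so that no blue $\mathcal{S}_{r_2,s_2}$ appears while keeping red right-degrees below $r_1-1$. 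Conversely, in any colouring on that many vertices one either finds the red star in the old part, or the first $r_1-2$ prepended vertices are ``used up'' in red and the remaining ones supply the missing right-side of a blue $\mathcal{S}_{r_2,s_2}$ around a blue $\mathcal{S}_{r_2,1}$. The identity $\Ro(\mathcal{S}_{r_1,s_1},\mathcal{S}_{r_2,s_2})=\Ro(\mathcal{S}_{r_1,1},\mathcal{S}_{r_2,s_2})+\Ro(\mathcal{S}_{1,s_1},\mathcal{S}_{r_2,s_2})-1$ is a ``split at the centre'' argument: the red star $\mathcal{S}_{r_1,s_1}$ is obtained by gluing $\mathcal{S}_{r_1,1}$ and $\mathcal{S}_{1,s_1}$ at the centre, so an extremal colouring is built by concatenating (and overlapping in one vertex) extremal colourings for the two halves, and in the other direction a pigeonhole splits the vertex set at a candidate centre into a left part that must contain a red $\mathcal{S}_{1,s_1}$ (or a blue $\mathcal{S}_{r_2,s_2}$) and a right part that must contain a red $\mathcal{S}_{r_1,1}$ (or the same blue star).

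\emph{Main obstacle.} The genuinely non-routine point is the exact optimization giving the floor-of-square-root formula in the base case: one must show both that $N=m+r_1+r_2-3$ forces one of the two stars (a careful edge count with the triangular correction term) and that $N=m+r_1+r_2-4$ admits an avoiding colouring (an explicit block construction). Everything after that is additive combinatorics of degrees. Since this is quoted from~\cite{choudum02}, I would cite that reference for the delicate count and only sketch the constructions.
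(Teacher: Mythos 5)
The paper does not prove Theorem~\ref{thmStarsPair} at all: it is imported verbatim from Choudum and Ponnusamy~\cite{choudum02} and used as a black box (in Proposition~\ref{propC4} and Lemma~\ref{lemmaNeighbors}), so there is no in-paper proof to compare your proposal against. Since you also ultimately defer the delicate parts to~\cite{choudum02}, your treatment is consistent with the paper's. For what it is worth, the one step you do carry out in detail is correct: writing $a=r_1-2$, $b=r_2-2$, a colouring of $\mathcal{K}_N$ avoiding both stars has at most $aN-\binom{a+1}{2}$ red and $bN-\binom{b+1}{2}$ blue edges (the ``triangular defect'' from the vertices near the two ends), and substituting $N=m+a+b+2$ turns the inequality $\binom{N}{2}\le (a+b)N-\binom{a+1}{2}-\binom{b+1}{2}$ into $\binom{m+2}{2}\le ab$, contradicting the maximality of $m=\bigl\lfloor(-1+\sqrt{1+8ab})/2\bigr\rfloor$ as the largest integer with $\binom{m+1}{2}\le ab$; so the upper bound of the base case is fully established by your count. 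The remaining pieces are only sketched, and two of them are genuinely not routine as stated: the extremal colouring on $m+r_1+r_2-3$ vertices is not the naive ``bounded edge length'' colouring (that only realises the case $m=0$), but requires a degree-sequence realisation in which the red right-degrees and blue left-degrees are distributed non-uniformly across the middle block; and your description of the upper-bound direction of the first additive identity (``the first $r_1-2$ prepended vertices are used up in red'') is a heuristic rather than an argument. One more nit: the sentence about blue edges forming a graph ``of out-degree $\le r_1-2$ and in-degree $\le r_2-2$'' conflates the red and blue degree constraints; only the blue in-degree (left-degree) bound and the red out-degree (right-degree) bound are available, which is what your subsequent count actually uses.
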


\subsubsection*{General upper bound for ordered stars}

\begin{proof}[{Proof of Theorem~\ref{thmStars}}]
For every $i\in [c]$, let $r_i$ and $s_i$ be positive integers such that $\mathcal{S}_i=\mathcal{S}_{r_i,s_i}$.
Let $N$ be a positive integer.
Suppose that the edges of $\mathcal{K}_N$ are colored by colors from $[c]$ so that for every $i\in [c]$, there is no $\mathcal{S}_{r_i,s_i}$ in color $i$.
Thus, in the ordered subgraph $\mathcal{G}_i$ of $\mathcal{K}_N$ formed by the edges of color $i$, every vertex has at most $r_i-2$ left neighbors or at most $s_i-2$ right neighbors.
Let $\mathcal{H}_i$ be the ordered subgraph of $\mathcal{G}_i$ obtained by deleting every edge incident from the left to a vertex with at most $r_i-2$ left neighbors, and every edge incident from the right to a vertex with at most $s_i-2$ right neighbors.
Clearly, $|E(\mathcal{G}_i) \setminus E(\mathcal{H}_i)| \le N \cdot (r_i+s_i-4) = N \cdot (n_i-3)$.
It follows that the ordered graph $\mathcal{H} \mathrel{\mathop:}= \bigcup_{i=1}^{c} \mathcal{H}_i$ has at least $\binom{N}{2} - N \cdot \sum_{i=1}^{c}(n_i-3) = N \cdot \left(N / 2 - 1/2 - \sum_{i=1}^{c}(n_i-3)\right)$ edges.

By the construction, each of the ordered graphs $\mathcal{H}_i$ is bipartite.
Hence, the ordered graph $\mathcal{H}$ is $2^c$-partite (in other words, $2^c$-colorable).
Therefore, by the AM--GM inequality or by Tur\'{a}n's theorem, $|E(\mathcal{H})| \le (1-1/2^c)\cdot N^2/2 = N \cdot (N/2-N/2^{c+1})$.
Putting the two estimates together, we obtain that $N/2^{c+1} \le 1/2 + \sum_{i=1}^{c}(n_i-3)$, from which the theorem follows.
\end{proof}

\subsubsection*{Lower bound for ordered stars with interval chromatic number $3$}

We give a lower bound for ordered Ramsey numbers of ordered stars that have at least one edge incident to the central vertex from each side.
For ``symmetric'' stars $\mathcal{S}_{r_i,r_i}$ with $r_i\ge 2$, the lower bound is within a constant multiplicative factor from the upper bound in Theorem~\ref{thmStars}.
For $r_1=\dots=r_c=s_1=\dots=s_c=2$, the bound is exactly the same as in Proposition~\ref{prop_mon_paths}.

\begin{proposition}
\label{prop_star_lower}
For all integers $c\ge 2$ and $r_1,\dots,r_c,s_1,\dots,s_c\ge 2$, we have 
\[
\Ro(\mathcal{S}_{r_1,s_1},\dots,\mathcal{S}_{r_c,s_c})>2^{c-1} \cdot \max\bigg(\max_{i\in[c]}\{r_i+s_i-2\}, 2+2\cdot\sum_{i=1}^{c} (\min(r_i, s_i) - 2)\bigg).
\]
\end{proposition}

\begin{proof}
Let $a \mathrel{\mathop:}=\max_{i\in[c]}\{r_i+s_i-2\}$, $b \mathrel{\mathop:}= 1+\sum_{i=1}^{c} (\min(r_i, s_i) - 2)$, and $N_1 \mathrel{\mathop:}= \max(a,2b)$.
Without loss of generality, we assume that $a=r_1+s_1-2$.
We construct $c$-colorings of the complete ordered graphs with $2^{i-1}\cdot N_1$ vertices, for $i=1,2,\dots,c$, by induction on $i$.

We start the construction with coloring the edges of $\mathcal{K}_{N_1}$. If $N_1=a$, we color every edge of $\mathcal{K}_{N_1}$ by color $1$.
Now suppose that $N_1=2b$.
For every $i\in [c]$, let $t_i \mathrel{\mathop:}= \min(r_i, s_i) - 2$, and let $b_i$ be the partial sum $\sum_{j=1}^{i} t_j$.
In particular, $b_c=b-1$.
Let $b_0 \mathrel{\mathop:}= 0$ and let $v_1, v_2, \dots, v_{N_1}$ be the vertices of $\mathcal{K}_{N_1}$ from left to right.
For every $k,l \in [b]$, $k<l$, color the edge $v_kv_l$ by color $i$ if $b_{i-1}<l-k\le b_i$.
In this coloring of the subgraph $\mathcal{K}_b=\mathcal{K}_{N_1}[v_1,\dots,v_b]$, every vertex has at most $t_i$ left neighbors and at most $t_i$ right neighbors joined by an edge of color $i$.
Color the subgraph $\mathcal{K}_{N_1}[v_{b+1},\dots,v_{2b}]$ analogously as $\mathcal{K}_b$, and finally, color every edge $v_iv_j$ with $i\le b<j$ by color $1$.

For $i\in \{2,3,\dots,c\}$, let $N_i \mathrel{\mathop:}= 2^{i-1}\cdot N_1$.
Once $\mathcal{K}_{N_i}$ is colored, we split the vertices $v_1,\dots,v_{N_{i+1}}$ of $\mathcal{K}_{N_{i+1}}$ into two intervals of length $N_i$, and color the subgraph induced by each of the two intervals using the coloring of $\mathcal{K}_{N_i}$.
Then we color every edge between the two intervals by color $i$.
See Figure~\ref{fig_coloring}.

It remains to verify that there is no monochromatic copy of $\mathcal{S}_{r_i,s_i}$ in the resulting coloring of $\mathcal{K}_{N_c}$.
In the case $N_1 = a$, every vertex has at most $r_1+s_1-3$ neighbors joined by an edge of color $1$ and for every other $i \in [c]$, it either has no left or no right neighbors in color $i$.
In the case $N_1=2b$, for every $i\in [c]$, every vertex has at most $r_i-2$ left neighbors or at most $s_i-2$ right neighbors in color $i$. 
\end{proof}

\begin{figure}
\centering
 \includegraphics{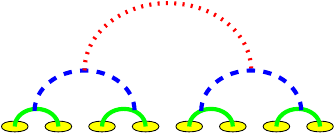} 
\caption{An inductive construction of the coloring avoiding stars $\mathcal{S}_{r_i,s_i}$ with $r_i,s_i\ge 2$.}
\label{fig_coloring}
\end{figure}


\subsection{Ordered paths}\label{subsection_paths}

Gerencs\'er and Gy\'arf\'as~\cite{gerencser67} determined the exact values for the Ramsey numbers $\R(P_r,P_s)$ of two paths $P_r$ and $P_s$. 

\begin{theorem}[{\cite{gerencser67}}]
For $2\le r \le s$, we have $\R(P_r,P_s)=s+\left\lfloor \frac{r}{2}\right\rfloor-1$.
\end{theorem} 

Here we prove Proposition~\ref{prop_alt_paths}, which shows that ordered Ramsey numbers for a particular ordering scheme of paths are linear in the number of vertices.
This result contrasts with Proposition~\ref{prop_mon_paths} and even more strongly with Theorem~\ref{veta_parovani}.
Let $v_1,\dots,v_n$ be the vertices of $P_n$ in the order as they appear along the path.
The \emph{alternating path} $(P_n,\lhd_{alt})$ is an ordered path where $v_1 \lhd_{alt} v_3 \lhd_{alt} v_5 \lhd_{alt} \dots \lhd_{alt} v_n \lhd_{alt} v_{n-1} \lhd_{alt} v_{n-3} \lhd_{alt} \dots \lhd_{alt} v_2$ for $n$ odd and $v_1 \lhd_{alt} v_3 \lhd_{alt} v_5 \lhd_{alt} \dots \lhd_{alt} v_{n-1} \lhd_{alt} v_{n} \lhd_{alt} v_{n-2} \lhd_{alt}\dots \lhd_{alt} v_2$ for $n$ even.
See Figure~\ref{fig_alt_path}.
Obviously, the alternating path $(P_n,\lhd_{alt})$ is an ordered subgraph of $\mathcal{K}_{\lceil n/2 \rceil,\lfloor n/2 \rfloor}$, and so it has interval chromatic number $2$. 

\begin{figure}
\centering
 \includegraphics{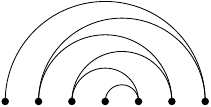} 
\caption{The alternating path $(P_7,\lhd_{alt})$.}
\label{fig_alt_path}
\end{figure}

To prove Proposition~\ref{prop_alt_paths}, we use a result from extremal theory of $\{0,1\}$-matrices (see Subsection~\ref{subsection_motivation} for definitions).
The following definitions are taken from~\cite{cib13}.
We say that an $r \times s$ matrix $M$ is \emph{minimalist} if $\ex_M(m,n)=(s-1)m+(r-1)n-(r-1)(s-1)$ when $m\ge r$ and $n\ge s$.
Clearly, the $1 \times 1$ identity matrix is minimalist.
If the matrix $M^\prime$ was created from a matrix $M$ by adding a new row (or a column) as the new first or last row (column) and this new row (column) contains a single 1-entry next to a 1-entry of $M$, then we say that $M^\prime$ was created by an \emph{elementary operation} from $M$. F\"{u}redi and Hajnal~\cite{fur92} proved the following lemma.

\begin{lemma}[\cite{fur92}]
Let $M$ be an $r \times s$ minimalist matrix and let $M^\prime$ be an $r^\prime \times s^\prime$ matrix obtained from $M$ by applying a sequence of elementary operations.
Then $M^\prime$ is minimalist.
\label{lemmaFuredi}
\end{lemma}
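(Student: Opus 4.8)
The plan is to prove Lemma~\ref{lemmaFuredi} by showing that a single elementary operation preserves minimalism, and then iterate. So it suffices to treat the case where $M'$ is obtained from $M$ by one elementary operation; say $M'$ is $r' \times s'$ with $(r',s') = (r+1,s)$ or $(r,s+1)$, and by symmetry (transposing everything) we may assume $M'$ has an extra row, appended as the new last row, containing a single $1$-entry in a column where $M$ also has a $1$ in its last row. Write $f(r,s) = (s-1)m + (r-1)n - (r-1)(s-1)$ for the conjectured extremal value; note $f(r+1,s) = f(r,s) + (n - s + 1)$. I would prove the two inequalities $\ex_{M'}(m,n) \ge f(r+1,s)$ and $\ex_{M'}(m,n) \le f(r+1,s)$ separately.

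For the lower bound, I would exhibit an $m \times n$ matrix with $f(r+1,s)$ ones avoiding $M'$. The natural candidate is to take an extremal $M$-avoiding matrix $A_0$ realizing $\ex_M(m,n) = f(r,s)$ and add ones: since $f(r+1,s) - f(r,s) = n - s + 1$, I would try filling in the top-left-most part of some row or the leftmost columns — concretely, fill row $1$ completely (if it is not already full) and argue that a copy of $M'$ would force a copy of $M$ in the remaining rows below a full row, which is impossible. The bookkeeping here needs care to make sure exactly $n - s + 1$ new ones are added and that no copy of $M'$ is created, using that the single new $1$ of $M'$ sits adjacent to a $1$ of $M$'s last row.

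For the upper bound — which I expect to be the main obstacle — I would take an $m \times n$ matrix $A'$ avoiding $M'$ with the maximum number of ones, and show it has at most $f(r+1,s)$ ones. The idea is a row-deletion / column-deletion reduction: if some row $i$ of $A'$ has at most $n - s + 1$ ones, delete it; the resulting $(m-1) \times n$ matrix still avoids $M'$, hence also avoids... no — it avoids $M'$ but we want to invoke minimalism of $M$, so we instead want a row whose deletion drops us to a matrix avoiding $M$. The cleaner route: argue that if $A'$ has more than $f(r+1,s)$ ones then every row has at least $n-s+2$ ones, and then find in the "densest'' rows a structure forcing a copy of $M$ among rows $2,\dots,m$ that extends, via the full-ish top row, to a copy of $M'$; alternatively induct on $m$, peeling off one row at a time, using $f(r+1,s) - f(r+1,s\text{ with }m\to m-1) = n - s + 1$, and showing a heavy row (one with $\ge n-s+2$ ones) must exist and can be combined with a copy of $M$ in the other $m-1$ rows (available because those rows carry $> f(r,s)$... here one needs the inductive estimate for $M$, i.e. exactly minimalism of $M$) to build $M'$. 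The delicate point is guaranteeing that the single extra $1$ of $M'$ can always be placed: this is where the hypothesis that the elementary operation attaches the new $1$ next to an existing $1$ of $M$ is used — the copy of $M$ supplies the neighbor, and the heavy row supplies a $1$ in an adjacent position by a pigeonhole/counting argument on how many columns the heavy row can avoid.

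Finally, I would assemble the induction on the number of elementary operations: the base case is that $M$ itself is minimalist (given), and each step preserves minimalism by the single-operation argument, with the transpose symmetry handling operations that add columns rather than rows, and a reflection symmetry handling "first'' versus "last''. I expect the write-up to be dominated by the extremal-function counting in the upper bound; the lower-bound construction and the inductive wrapper should be routine.
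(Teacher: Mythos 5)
The paper itself gives no proof of this lemma---it is quoted from F\"uredi and Hajnal \cite{fur92}---so I am judging your sketch against the standard argument. Your frame (one elementary operation at a time, transposition and reflection symmetries, separate lower and upper bounds) is right, and the lower bound is essentially correct once two details are fixed. First, the row you fill must sit at the \emph{same} end at which the operation appended the new row of $M'$: if $M'$ adds a new last row and you fill row $1$, you can create copies of $M'$; already for $M=(1\;1)$ and $M'=\left(\begin{smallmatrix}1&1\\1&0\end{smallmatrix}\right)$, filling row $1$ of the extremal $M$-avoider that has a single $1$ in column $1$ of every row produces a copy of $M'$. Second, the cleanest bookkeeping is to take an extremal $M$-avoider on $m-1$ rows and append an entirely full new last row: this has exactly $\ex_M(m-1,n)+n=(s-1)m+rn-r(s-1)$ ones and avoids $M'$, since any copy of $M'$ would have to use the full row as its last row and hence exhibit a copy of $M$ in the $M$-avoider above it.

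The genuine gap is in the upper bound. Since $f_{M'}(m,n)-f_{M'}(m-1,n)=s-1$, deleting light rows only lets you assume every row has at least $s$ ones, not $n-s+2$; the quantity $n-s+1$ is the \emph{total} surplus $f_{M'}(m,n)-f_M(m,n)$, not a per-row threshold. So for large $n$ there need not exist any ``heavy'' row meeting almost every column, and the plan of welding a copy of $M$ to a near-full row has nothing to stand on. The argument that works is column-wise and needs no induction on $m$. Let $t$ be the column of the $1$-entry in the last row of $M$ below which the elementary operation placed the new $1$, and let $A$ be an $m\times n$ matrix avoiding $M'$. Delete from $A$ the bottommost $1$-entry of column $j$ for every $j$ with $t\le j\le n-s+t$ (at most $n-s+1$ deletions). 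The resulting matrix avoids $M$: in any surviving copy of $M$ on rows $i_1<\dots<i_r$ and columns $j_1<\dots<j_s$ one has $t\le j_t\le n-s+t$, so the surviving $1$ at $(i_r,j_t)$ is not the bottommost $1$ of column $j_t$ in $A$, and the $1$ of $A$ strictly below it in that column turns the copy of $M$ into a copy of $M'$, a contradiction. Hence $|A|\le \ex_M(m,n)+n-s+1=(s-1)m+rn-r(s-1)$, as required; note that the adjacency hypothesis in the definition of an elementary operation is used exactly to guarantee that $M$ has a $1$ at position $(r,t)$.
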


\begin{proof}[Proof of Proposition~\ref{prop_alt_paths}]
We show that, for $n>2$, we have $5\lfloor n/2 \rfloor -4 \le \Ro((P_n,\lhd_{alt})) \le 2n-3+\sqrt{2n^2-8n+11}$.
For the lower bound, let $r\mathrel{\mathop:}=\lfloor n/2 \rfloor - 1$ and $N \mathrel{\mathop:}=5r$.
We use an upper triangular $\{0,1\}$-matrix $A = (a_{i,j})_{i,j=1}^N$ to represent a red-blue coloring $c$ of $\mathcal{K}_N$ that avoids $(P_n,\lhd_{alt})$.
The construction of $A$ is sketched in part~a) of Figure~\ref{fig_alt_path_proof}.
For $1 \leq i<j \le N$, the edge $ij$ of $\mathcal{K}_N$ is blue in $c$ if $A_{i,j}=0$ and red in $c$ if $A_{i,j}=1$.
For integers $k$ and $l$ with $1 \le k \le l\le 5$, we use $B_{k,l}$ to denote the $r\times r$ blocks that partition~$A$.
The block $B_{k,l}$ contains entries $a_{i,j}$ with $(k-1)r+1\le i \leq kr$ and $(l-1)r+1\le j \leq lr$.
There are two types of blocks.
\emph{Red blocks}, containing only 1-entries, and \emph{blue blocks}, containing only 0-entries.
The blocks $B_{1,5}, B_{2,2}, B_{2,3},B_{3,3},B_{3,4},B_{4,4}$ are red and all the other blocks are blue.

Suppose for contradiction that $c$ contains a monochromatic copy $\mathcal{P}$ of $(P_n,\lhd_{alt})$.
Then there are entries $a_{i_1,j_1}=\dots=a_{i_{n-1},j_{n-1}}$ in~$A$ such that, for $t=1,\ldots,n-2$, we have $i_t<i_{t+1}$ and $j_t=j_{t+1}$ if $t$ is odd and $i_t=i_{t+1}$ and $j_t>j_{t+1}$ if $t$ is even.
Every block $B_{k,l}$ has at most $r$ rows and at most $r$ columns of $A$, but $\mathcal{P}$ spans at least $r+1$ rows and $r+1$ columns.
Therefore there are three blocks $B_1,B_2,B_3$ of the same color that satisfy one of the following two conditions.
Either the block $B_1$ is above $B_2$ and $B_3$ is to the left of $B_2$ or the block $B_1$ is to the right of $B_2$ and $B_3$ is below $B_2$.
However, the matrix $A$ contains no such triples of blocks, a contradiction.

Several alternative constructions of $A$ are illustrated in parts b), c), and d) of Figure~\ref{fig_alt_path_proof}.

\begin{figure}
\centering
 \includegraphics{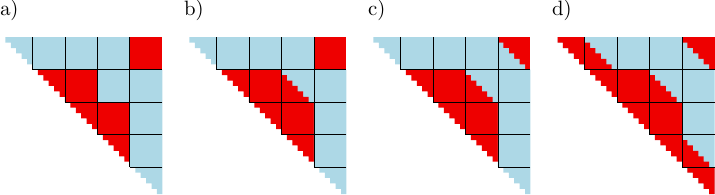} 
\caption{a)--d) Various constructions for the lower bound on $\Ro((P_n, \lhd_{alt}))$.
The red entries represent 1, the blue entries represent 0. 
}
\label{fig_alt_path_proof}
\end{figure}

Now we show the upper bound.
Let $N$ be a positive integer.
Suppose that the edges of $\mathcal{K}_{\lceil N/2\rceil,\lfloor N/2\rfloor}$ are colored red and blue.
Without loss of generality, at least half of the edges are red.
Consider the $\lceil n/2\rceil \times \lfloor n/2\rfloor$ $\{0,1\}$-matrix $M \mathrel{\mathop:}= M((P_n,\lhd_{alt}))$, which is defined in Subsection~\ref{subsection_motivation}.
By Lemma~\ref{lemmaFuredi}, $M$ is minimalist, as $M$ is obtained from the $1 \times 1$ matrix with a single $1$-entry by alternately adding a new last row with a single $1$-entry just below another $1$-entry and a new first column with a single $1$-entry to the left of another $1$-entry. Therefore, 
\begin{align*}
\ex_M(\lceil N/2\rceil,\lfloor N/2\rfloor) &= (\lfloor n/2\rfloor-1)\lceil N/2\rceil+(\lceil n/2\rceil-1)\lfloor N/2\rfloor\\
&-(\lceil n/2\rceil-1)(\lfloor n/2\rfloor-1)
\le \frac{1}{4}(2nN+4n-4N-3-n^2).
\end{align*} 
The subgraph of $\mathcal{K}_{\lceil N/2\rceil,\lfloor N/2\rfloor}$ formed by red edges has at least $\frac{1}{2}\lceil N/2\rceil \cdot \lfloor N/2\rfloor \ge (N^2-1)/8$ edges.
Hence, if there is no red copy of $(P_n,\lhd_{alt})$, the inequality
\[
N^2-1 \le 4nN+8n-8N-6-2n^2 
\]
is satisfied.
Consequently, $N \le 2n-4+\sqrt{2n^2-8n+11}$ and the result follows.
\end{proof}

We do not know the precise multiplicative factor in $\Ro((P_n,\lhd_{alt}))$.
Our computer experiments~\cite{bckkWeb} indicate that $\Ro(P_n,\lhd_{alt})$ could be of the form $\lfloor (n-2)\frac{1+\sqrt{5}}{2}\rfloor+n$; see Table~\ref{tab_alt_path}.
In our experiments we used the Glucose SAT solver~\cite{aud12}.

\begin{table}[ht]
\begin{center} 
 \renewcommand{\arraystretch}{1.2}

 \begin{tabular}{c|r|r|r|r|r|r|r|r|r|r|r|r}
  $n$ & $2$ & $3$ & $4$ & $5$ & $6$ & $7$ & $8$ & $9$ & $10$ & $11$ & $12$ & $13$ \\
\hline
  $\Ro(n)$ & $2$ & $4$ & $7$ & $9$ & $12$ & $15$ & $17$ & $\ge 20$ & $\ge 22$ & $\ge 25$ & $\ge 28$ & $\ge 30$\\
 \end{tabular}
 \caption{Estimates of the ordered Ramsey numbers $\Ro(n) \mathrel{\mathop:}= \Ro((P_n,\lhd_{alt}))$ for $n \le 13$.}
 \label{tab_alt_path}
\end{center}
\end{table}

For general ordered paths, Cibulka et al.~\cite{cibGao13} showed that  for every ordered path $\mathcal{P}_r$ and the ordered complete graph $\mathcal{K}_s$ we have 
\[
\Ro(\mathcal{P}_r,\mathcal{K}_s) \le 2^{\lceil \log s\rceil (\lceil \log r \rceil + 1)}.
\] 
That is, for every ordered path $\mathcal{P}_n$ we have $\Ro(\mathcal{P}_n) \le n^{O(\log n)}$. 
This bound also follows from a result of Conlon et al.~\cite{conFox14} who showed that there is a constant $c$ such that every $n$-vertex ordered graph $\mathcal{G}$ with degeneracy $k$ and interval chromatic number $p$ satisfies $\Ro(\mathcal{G})\le n^{ck\log{p}}$. 

The best known lower bound on $\Ro(\mathcal{P}_n)$ comes from Theorem~\ref{veta_parovani}, which implies that there are arbitrarily 
long ordered paths $\mathcal{P}_n$ on $n$ vertices such that $\Ro(\mathcal{P}_n) \ge n^{\log{n}/ (5\log\log{n})}.$


\subsection{Ordered cycles}\label{subsection_cycles}

It is a folklore result in Ramsey theory that $\R(C_3)=\R(C_4)=6$~\cite{chvat72}.
The first results on Ramsey numbers of cycles were obtained by Chartrand and Chuster~\cite{chartrand71} and by Bondy and Erd\H{o}s~\cite{bondy73}.
These were later extended by Rosta~\cite{rosta73} and by Faudree and Schelp~\cite{faudree74}.
Together, these results give exact formulas for Ramsey numbers of cycles in the two-color case.
\[
\R(C_r,C_s)=
\begin{cases}
2r-1 & \textrm{if } (r,s)\neq (3,3), r \ge s \ge 3, s \textrm{ is odd}, 
\\r+s/2-1 & \textrm{if } (r,s)\neq (4,4), r \ge s \ge 4, r, s \textrm{ are even},
\\\max\{r+s/2,2s\}-1 & \textrm{if } r > s \ge 4, s \textrm{ is even and } r \textrm{ is odd}.  
\end{cases}
\]

The smallest cycle whose ordered Ramsey numbers are nontrivial to determine is $C_4$. There are three pairwise nonisomorphic orderings of $C_4$; see Figure~\ref{fig4_C4}.
We determine the ordered Ramsey number of each of the three orderings of $C_4$.

\begin{figure}
\centering
 \includegraphics{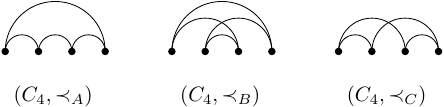} 
\caption{Three possible orderings of $C_4$.}
\label{fig4_C4}
\end{figure}

\begin{proposition}
\label{propC4}
We have \begin{enumerate}
\item[{\rm 1)}] $\Ro((C_4,\prec_A)) = 14$, 
\item[{\rm 2)}] $\Ro((C_4,\prec_B)) =10$, 
\item[{\rm 3)}] $\Ro((C_4,\prec_C)) =11$.
\end{enumerate}
\end{proposition}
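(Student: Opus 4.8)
The plan is to treat the three orderings separately, since each requires a different mix of upper-bound (extremal / Turán-type) and lower-bound (explicit coloring) arguments. For the upper bounds, the natural tool is the correspondence between ordered bipartite subgraphs and $\{0,1\}$-matrices described in Section~\ref{subsection_motivation}: each of $(C_4,\prec_A)$, $(C_4,\prec_B)$, $(C_4,\prec_C)$ is an ordered subgraph of some $\mathcal{K}_{r,s}$ (one checks the interval chromatic number of each ordering is $2$, with the bipartition classes of sizes $2,2$ read off from Figure~\ref{figC4}), so the ordered Turán number of each in $\mathcal{K}_{m,n}$ equals the extremal function $\ex_{M}(m,n)$ of the associated pattern matrix $M$. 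The key computation will be: split the edges of $\mathcal{K}_N$ by color, push roughly half the edges into a fixed color class, pass to the induced bipartite picture $\mathcal{K}_{\lceil N/2\rceil,\lfloor N/2\rfloor}$ (as in the proof of Proposition~\ref{propAltPaths}), and compare the guaranteed number of monochromatic edges with $\ex_M$. Since $C_4$ has only four vertices, each pattern matrix is tiny (a $2\times 2$ block or a $1$-row/$1$-column configuration after identifying coincident vertices), so its extremal function should be computable by hand or by a short extremal-graph argument — I expect values of the form $\ex_M(m,n) = am + bn - c$ for small constants, possibly using minimalist-matrix machinery (Lemma~\ref{lemmaFuredi}) for one of the three cases and a direct double-counting argument for the others.

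For the lower bounds I would produce explicit $2$-colorings of $\mathcal{K}_{N-1}$ avoiding a monochromatic copy of the relevant ordered $C_4$. The cleanest family of candidate colorings is the "distance modulo $k$" type used for the alternating path in Proposition~\ref{propAltPaths} (color $\{i,j\}$ by a function of $|i-j| \bmod k$) together with the "block" colorings of Proposition~\ref{propStarConst} (partition the vertices into consecutive intervals, color within-interval edges one way and between-interval edges the other). For $(C_4,\prec_B)$, where the answer $10$ is relatively small, an ad hoc or computer-verified coloring on $9$ vertices is acceptable and I would just exhibit it. For $(C_4,\prec_A)$ the target $14$ suggests a coloring on $13$ vertices that is more structured — likely a product/recursive construction of two $2$-colored cliques of appropriate sizes, or a lift of the unordered $K_{3,3}$-free / $C_4$-free extremal configuration (a projective-plane-like incidence pattern) adapted to the ordered setting. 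For the remaining case $(C_4,\prec_C)$ only a gap $11 \le \cdot \le 13$ is claimed, so the lower bound is an explicit $10$-vertex coloring and the upper bound is the extremal-function computation above, with the two not meeting — I would flag honestly that closing this gap is left open.

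The main obstacle will be the $(C_4,\prec_A)$ upper bound of $14$: here the ordered $C_4$ is the "nested" (or "crossing") configuration whose pattern matrix has the full $2\times 2$ all-ones structure, so its ordered Turán number behaves like the ordinary Zarankin/$C_4$-free extremal number $\Theta(n^{3/2})$ rather than being linear, and the simple "half the edges are one color, compare to a linear bound" argument fails. To get a clean constant $14$ one instead needs a sharper, genuinely ordered extremal estimate — presumably a statement of the form: any ordered bipartite graph on classes of size $m$ and $n$ with more than some explicit threshold of edges contains the nested $C_4$, where the threshold, when fed $m = n = 7$ (half of $13$), still leaves room for both colors to be $C_4$-free. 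Verifying that the arithmetic genuinely forces $N = 14$ and not $13$ or $15$, and producing the matching $13$-vertex extremal coloring, is the delicate part; the other two cases are comparatively routine once the matrix correspondence is set up.
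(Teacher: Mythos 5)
Your proposal rests on a false premise: you assert that all three orderings of $C_4$ have interval chromatic number $2$ and are therefore ordered subgraphs of some $\mathcal{K}_{r,s}$, so that the $\{0,1\}$-matrix/Tur\'an correspondence applies. Only one of the three orderings has this property, namely the one whose two non-edges are $\{1,2\}$ and $\{3,4\}$ (this is $\prec_B$, the ordered $\mathcal{K}_{2,2}$). The monotone cycle (which is $\prec_A$, the case with answer $14$) has interval chromatic number $4$, since consecutive vertices are adjacent, and the remaining ordering has interval chromatic number $3$. Passing to the bipartite graph between two halves of $\mathcal{K}_N$ discards exactly the edges these two orderings need, so your main engine is unavailable for two of the three cases. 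Your diagnosis of the hard case is also off target: $\prec_A$ is not the ``full $2\times 2$ all-ones pattern,'' and no Zarankiewicz-type or projective-plane extremal estimate is involved. The paper obtains $14$ as the instance $r=s=4$ of Theorem~\ref{thmRamseyCycles}, whose upper bound is a two-line argument from Lemma~\ref{lemCycC3Path}: the first vertex of $\mathcal{K}_{14}$ has either $(r-2)(s-1)+1$ red or $(r-1)(s-2)+1$ blue neighbors, and since $\Ro((P_{r-1},\lhd_{mon}),\mathcal{K}_s)=(r-2)(s-1)+1$, its red neighborhood contains a red monotone $P_{r-1}$ that closes to a red monotone $C_r$ through the first vertex (or a blue clique yielding a blue monotone $C_s$); the matching $13$-vertex coloring is an explicit interval construction, not an incidence geometry. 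For $\prec_C$ the paper's upper bound of $13$ similarly hinges on the first and last vertices together with $\Ro(\mathcal{S}_{1,3},\mathcal{S}_{3,1})=5$ from Theorem~\ref{thmStarsPair}; none of this is reachable from your setup.

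To your credit, for $\prec_B$ your approach does go through and differs from the paper's: splitting $\mathcal{K}_{10}$ into two halves of size $5$ gives $25$ crossing edges, the majority color has at least $13 > z(5,5;2,2)=12$ of them, and any $K_{2,2}$ between the halves is automatically an ordered copy of $(C_4,\prec_B)$. That is arguably cleaner than the paper's right-degree case analysis, though you would still need to exhibit the $9$-vertex lower-bound coloring rather than defer it. But as a proof of the whole proposition the proposal has a genuine gap: it contains no workable idea for the upper bounds in cases $\prec_A$ and $\prec_C$, which are exactly the parts that require the monotone-path lemma and the star Ramsey numbers rather than bipartite Tur\'an bounds.
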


\begin{proof}
We provide colorings showing the lower bounds on a separate webpage~\cite{bckkWeb}.
We now show the upper bounds. 

\begin{enumerate}
\item[{\rm 1)}] This result follows from Theorem~\ref{theorem_Ramsey_cycles}, which is proved later in this subsection.

\item[{\rm 2)}]  Consider $(K_{10},\prec)$ with vertices $v_1 \prec v_2 \prec \dots \prec v_{10}$ and edges colored red and blue.
We put each of the vertices $\{v_4, v_5, \dots, v_{10}\}$ into one of the following six classes.
Class $(i,c)$, where $i \in \{1,2,3\}$ and $c \in\{\text{red}, \text{blue}\}$, contains vertices connected by an edge of color $c$ to both vertices in $\{v_1, v_2, v_3\} \setminus \{v_i\}$.
Note that each of the seven vertices $\{v_4, v_5, \dots, v_{10}\}$ is in one or three of these classes.
Consequently, one of the six classes contains at least two vertices.
Thus there are two vertices that share two left neighbors of the same color, which implies a monochromatic copy of $(C_4,\prec_B)$.

\item[{\rm 3)}] 
Exhaustive computer search showed that $\Ro((C_4,\prec_C))=11$~\cite{bckkWeb}. Here we prove a weaker upper bound $\Ro((C_4,\prec_C))\le 13$.

Consider $(K_{13},\prec)$ with vertices $v_1 \prec \dots \prec v_{13}$, edges colored red and blue, and no monochromatic $(C_4,\prec_C)$.
Without loss of generality, $v_1$ has six red neighbors among $\{v_2,v_3,\dots,v_{12}\}$.
If $v_1$ and $v_{13}$ had two common red neighbors then they would form a red copy of $(C_4,\prec_C)$.
Thus there is a set $R \subseteq \{v_2,v_3,\dots,v_{12}\}$ of at least five vertices such that each of them is adjacent to $v_1$ by a red edge and to $v_{13}$ by a blue edge.
By Theorem~\ref{thmStarsPair} we have $\Ro(\mathcal{S}_{1,3},\mathcal{S}_{3,1})=5$.
Therefore the complete graph formed by the five vertices of $R$ contains either a vertex with at least two red edges incident from the left or a vertex with at least two blue edges incident from the right.
In both cases we obtain a  monochromatic  copy of $(C_4,\prec_C)$. \qedhere

\end{enumerate}
\end{proof}


\subsubsection*{Monotone cycles}
Here we prove Theorem~\ref{theorem_Ramsey_cycles}. We use the following simple lemma, which is implicitly proved in~\cite{kar97}. We include its proof for completeness.

\begin{lemma}[{\cite{kar97}}]
\label{lemCycC3Path}
For positive integers $r$ and $s$, we have \[\Ro((P_r,\lhd_{mon}),\mathcal{K}_s)=\Ro((P_r,\lhd_{mon}),(P_s,\lhd_{mon}))=(r-1)(s-1)+1.\]
\end{lemma}
\begin{proof}
The lower bound $(r-1)(s-1)+1\le\Ro((P_r,\lhd_{mon}),(P_s,\lhd_{mon}))$ follows from Proposition~\ref{prop_mon_paths}.
For the upper bound $\Ro((P_r,\lhd_{mon}),\mathcal{K}_s)\le(r-1)(s-1)+1$, we apply induction on $r$.
Let $\mathcal{G}$ be an ordered complete graph with $(r-1)(s-1)+1$ vertices and with edges colored red and blue. The statement is true for $r=2$, since either $\mathcal{G}$ is a blue copy of $\mathcal{K}_s$ or $\mathcal{G}$ has a red edge.
Let $r\ge 3$. By the induction hypothesis, $\mathcal{G}$ has either a blue copy of $\mathcal{K}_s$ or at least $(r-1)(s-1)+1-(r-2)(s-1)=s$ distinct vertices that are the rightmost vertices of a red copy of $(P_{r-1},\lhd_{mon})$. Either every edge between these vertices is blue, which gives a blue copy of $\mathcal{K}_s$, or a red edge extends one of the red paths $(P_{r-1},\lhd_{mon})$ to a red copy of  $(P_{r},\lhd_{mon})$.
\end{proof}

\paragraph{Proof of Theorem~\ref{theorem_Ramsey_cycles}.}
The upper bound was proved by K\'arolyi et al.~\cite[Theorem 2.1]{kar98}.
We include the proof here for completeness.

Let $\mathcal{G}$ be an ordered complete graph with $N \mathrel{\mathop:}= 2rs-3r-3s+6$ vertices and with edges colored red and blue. The leftmost vertex, $v_1$, has either at least $(r-2)(s-1)+1$ red neighbors or at least $(r-1)(s-2)+1$ blue neighbors.
In the first case, by Lemma~\ref{lemCycC3Path}, $\mathcal{G}$ has a red copy of $(P_{r-1},\lhd_{mon})$ that forms a red copy of $(C_r,\lhd_{mon})$ together with $v_1$, or a blue copy of $(C_s,\lhd_{mon})$.
The second case is symmetric.

\begin{figure}
\centering
 \includegraphics{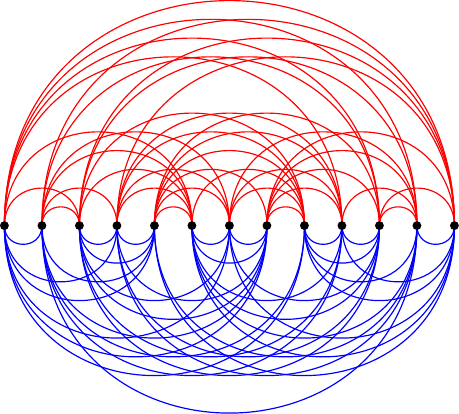} 
\caption{A coloring of $\mathcal{K}_{13}$ with no monochromatic monotone cycle of length $4$.}
\label{fig5_C4_lower_bound}
\end{figure}

Now we prove the lower bound. Let $N \mathrel{\mathop:}= 2rs-3r-3s+5$. We construct a coloring of $\mathcal{K}_N=(K_N,\prec)$ that avoids a red copy of $(C_r,\lhd_{mon})$ and a blue copy of $(C_s,\lhd_{mon})$.
See Figure~\ref{fig5_C4_lower_bound} for an example of such coloring for $r=s=4$.
We partition the vertices of $\mathcal{K}_N$ into disjoint intervals $I_1,\dots,I_{2r-3}$, from left to right.
For $r$ odd, the $(r-1)/2$ leftmost and $(r-1)/2$ rightmost intervals are of size $s-1$ and the remaining $r-2$ intervals are of size $s-2$. 
For $r$ even, the $(r-2)/2$ leftmost and $(r-2)/2$ rightmost intervals are of size $s-2$ and the remaining $r-1$ intervals are of size $s-1$.
In both cases we have $N$ vertices in total.

We call the intervals of size $s-1$ \emph{long} and the intervals of size $s-2$ \emph{short}.
We label the vertices of $I_i$ as $v^i_1, v^i_2, \dots, v^i_{\lvert I_i \rvert}$ from left to right.
We call the index $j$ the \emph{index of the vertex} $v^i_j$.

The coloring of the edges of $\mathcal{K}_N$ is defined as follows.
For every $i\in [2r-3]$, we color all the edges among the vertices of $I_i$ blue.
We define four types of edges with vertices in different intervals. The type of an edge is determined by the pair of intervals containing its vertices. The color of an edge is determined by its type and by the relative value of the indices of its vertices. We say that an edge $e=v^i_kv^j_l$ between intervals $I_i$ and $I_j$ with $i<j$ is of \emph{type}
\begin{itemize}
\item $T_<$ if $j-i \le r-2$ and $|I_i| \le |I_j|$. In this case we color $e$ blue if $k<l$ and red otherwise.
\item $T_{\ge}$ if $j-i > r-2$ and $|I_i| < |I_j|$. In this case we color $e$ blue if $k\ge l$ and red otherwise.
\item $T_{>}$ if $j-i > r-2$ and $|I_i| \ge |I_j|$. In this case we color $e$ blue if $k> l$ and red otherwise.
\item $T_{\le}$ if $j-i \le r-2$ and $|I_i| > |I_j|$. In this case we color $e$ blue if $k\le l$ and red otherwise.
\end{itemize}

The definition of the types and the distribution of the types in $\mathcal{K}_N$ are illustrated in Figures~\ref{fig7_cycle_types} and~\ref{fig13_cycle_types_distr}, respectively.

The distribution of long and short intervals implies the following claim.

\begin{claim}
\label{claim_there_can_be_only_one}
Every monochromatic monotone path $\mathcal{P}$ in the constructed coloring of $\mathcal{K}_N$ contains at most one edge $uv$, $u \prec v$, with $u$ in a long interval and $v$ in a short interval, and at most
one edge $u'v'$, $u'\prec v'$, with $u'$ in a short interval and $v'$ in a long interval.
\end{claim}
\begin{proof}
Let $uv$, where $u \prec v$, be the first edge of $\mathcal{P}$ with $u$ in a long interval and $v$ in a short interval.
If $r$ is odd, then $u$ lies in $J_r = \bigcup_{i=1}^{(r-1)/2} I_i$ and $v$ lies to the right of $J_r$.
If $r$ is even, then $u \in J_r = \bigcup_{i=(r-2)/2+1}^{(5r-4)/2} I_i$ and $v$ is to the right of $J_r$.
Any other edge of $\mathcal{P}$ with the left vertex in a long interval and with the right one in a short interval would have the left vertex in $J_r$ and also to the right of $v$.
However, this is impossible, as $v$ is to the right of $J_r$.
The argument for the edge $u'v'$ is analogous.
\end{proof}

\begin{figure}
\centering
 \includegraphics{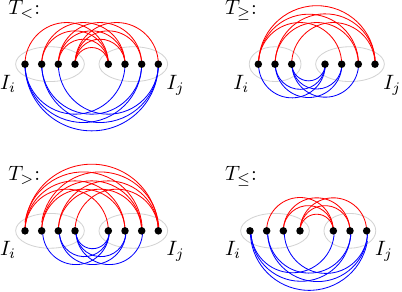} 
\caption{The types of pairs $(I_i,I_j)$ for $s=5$ and colorings of corresponding edges.}
\label{fig7_cycle_types}
\end{figure}

\begin{figure}
\centering
 \includegraphics{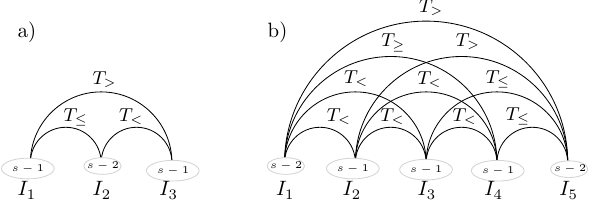} 
\caption{Distribution of types of pairs $(I_i,I_j)$ in $\mathcal{K}_N$ for a) $r=3$ and b) $r=4$.}
\label{fig13_cycle_types_distr}
\end{figure}

First we show that our coloring of $\mathcal{K}_N$ contains no red copy of $(C_r,\lhd_{mon})$.
Suppose for contradiction that there is such a copy $\mathcal{C}$. Let $\mathcal{P}$ be the monotone path on $r$ vertices contained in $\mathcal{C}$. Let $u$ be the leftmost vertex of $\mathcal{P}$ and $v$ the rightmost vertex of $\mathcal{P}$. The edge $uv$ is thus the longest edge of $\mathcal{C}$.
Note that $\mathcal{C}$ contains at most one vertex from each interval $I_i$, as every interval contains only blue edges.
The path $\mathcal{P}$ contains no edge of type $T_{>}$ or $T_{\ge}$, since otherwise $\mathcal{P}$ would skip vertices from at least $r-2$ intervals, leaving at most $2r-3-(r-2)=r-1$ intervals.
Hence the vertex indices in $\mathcal{P}$ are nonincreasing from left to right, as $\mathcal{P}$ uses red edges of types $T_{<}$ and $T_{\le}$ only. 

Since the edge $uv$ skips at least $r-2$ intervals, it is of type $T_{>}$ or $T_{\ge}$, and thus the index of $v$ is at least as large as the index of $u$. In combination with the previous observation, this implies that the indices of $u$ and $v$ are equal. Consequently, $uv$ is of type $T_>$, and every edge of $\mathcal{P}$ is of type $T_<$.
Since there are at most $r-1$ long intervals and at most $r-1$ short intervals, the path $\mathcal{P}$ contains at least one vertex from a long interval and at least one vertex from a short interval. Since every edge of $\mathcal{P}$ is of type $T_<$, this implies that $u$ is in a short interval and $v$ is in a long interval. This is a contradiction since $uv$ is of type $T_>$.

Now we show that our coloring of $\mathcal{K}_N$ contains no blue copy of $(C_s,\lhd_{mon})$.
Suppose for contradiction that there is such a copy $\mathcal{C}$.
Let $\mathcal{P}$ be the monotone path on $s$ vertices contained in $\mathcal{C}$. Let $u$ be the leftmost vertex of $\mathcal{P}$ and $v$ the rightmost vertex of $\mathcal{P}$.
This time, $\mathcal{C}$ can contain edges between vertices from the same interval. However, $u$ and $v$ belong to different intervals, as no interval contains $s$ vertices.
We distinguish a few cases.

\begin{enumerate}
\item
First, assume that $\mathcal{P}$ contains only edges with both vertices in the same interval, edges of type $T_{<}$, and edges of type $T_{\le}$. Then the vertex indices along $\mathcal{P}$ are nondecreasing from left to right.
By Claim~\ref{claim_there_can_be_only_one}, at most one edge of $\mathcal{P}$ is of type $T_{\le}$.
Thus there is at most one edge of $\mathcal{P}$ between vertices with the same vertex index.
Since every vertex has index at most $s-1$, we see that $\mathcal{P}$ has exactly one edge of type $T_{\le}$ and that the index of $v$ is $s-1$. In particular, $v$ is in a long interval. This implies that from left to right, $\mathcal{P}$ visits a long, a short, and a long interval, in this order.
The distribution of short and long intervals implies that $r$ is odd, $u$ is in a long interval $I_i$, $v$ is in a long interval $I_j$, and $j-i > r-2$. This further implies that $uv$ is of type $T_{>}$, but this contradicts the fact that the index of $u$ is $1$ and the index of $v$ is $s-1$.

\item
In the remaining case, $\mathcal{P}$ has an edge $f$ between intervals $I_i$ and $I_j$ with $j-i>r-2$. There is exactly one such edge since the total number of intervals is $2r-3$.
Every other edge of $\mathcal{P}$ is of type $T_{<}$, or of type $T_{\le}$, or has both vertices in the same interval.
Since $e=uv$ is longer than $f$, it is of type $T_{>}$ or $T_{\ge}$. Therefore the index of $u$ is larger than or equal to the index of $v$. Let $x$ be the left vertex of $f$ and $y$ the right vertex of $f$. Let $\mathcal{P}_1$ be the subpath of $\mathcal{P}$ with endpoints $u$ and $x$, and let $\mathcal{P}_2$ be the subpath of $\mathcal{P}$ with endpoints $y$ and $v$.

\begin{enumerate}
\item
Suppose that $\mathcal{P}$ has no edge of type $T_{\le}$.
Then the indices of vertices in both paths $\mathcal{P}_1$ and $\mathcal{P}_2$ are strictly increasing from left to right. Since $\mathcal{P}_1$ and $\mathcal{P}_2$ have $s-2$ edges in total, it follows that the index of $u$ is equal to the index of $v$, the index of $y$ is $1$ and the index of $x$ is $s-1$. In particular, $x$ is in a long interval and $e$ is of type $T_{\ge}$. This implies that $u$ is in a short interval and $v$ is in a long interval, but this is in contradiction with the distribution of long and short intervals.
In particular, there should be at least $r-2$ intervals between the long intervals that contain $x$ and $v$, but then there is no short interval to the left of the interval containing $x$ if $r$ is odd and there are only at most $r-3$ intervals between two long intervals if $r$ is even.

\item
Suppose that $\mathcal{P}$ has an edge $g$ of type $T_{\le}$. By Claim~\ref{claim_there_can_be_only_one}, there is exactly one such edge. Since $g$ goes from a long interval to a short interval, the edge $e$ cannot go from a short interval to a long interval, by the distribution of long and short intervals. Thus $e$ is of type $T_{>}$.
Consequently, the index of $u$ is larger than the index of $v$. The indices of vertices in both paths $\mathcal{P}_1$ and $\mathcal{P}_2$ are strictly increasing from left to right, with the exception of the edge $g$, whose vertices can have equal indices. It follows that the index of $x$ is $s-1$ and the index of $y$ is $1$. Consequently, $x$ is in a long interval and so $f$ is of type $T_>$. This is a contradiction, since $\mathcal{P}$ cannot have an edge of type $T_>$ together with an edge of type $T_{\le}$, by the distribution of long and short intervals.
In particular, $y$ is in a long interval by Claim~\ref{claim_there_can_be_only_one} and then there is either no short interval to the right nor to the left of $f$ if $r$ is odd or there is no edge of type $T_>$ between two long intervals if $r$ is even.
\end{enumerate}
This finishes the proof that our coloring of $\mathcal{K}_N$ contains no blue copy of $(C_s,\lhd_{mon})$, and the proof of Theorem~\ref{theorem_Ramsey_cycles}. 
\end{enumerate}

Note that we have proved a slightly stronger statement: in our coloring of $\mathcal{K}_N$, there is no red monotone cycle of length at least $r$ and no blue monotone cycle of length at least $s$.

As noted by Cibulka et al.~\cite{cibGao13}, Theorem~\ref{theorem_Ramsey_cycles} implies an exact formula for geometric and convex geometric Ramsey numbers of cycles (see Subsection~\ref{subsection_motivation} for definitions). 

\begin{corollary}
\label{cor_Karolyi}
For every integer $n \ge 3$, we have $\Rc(C_n)=\Rg(C_n)=2n^2 - 6n + 6$.
\end{corollary}
\begin{proof}
We recall that $\Rc(G) \le \Rg(G)$ for every outerplanar graph $G$.
The upper bound $\Rg(C_n) \le 2n^2-6n+6$ was proved by K\'arolyi et al.~\cite{kar98}. The lower bound $2n^2-6n+6 \le \Rc(C_n)$ follows from Observation~\ref{obsDiscreteGeo} and Theorem~\ref{theorem_Ramsey_cycles}.
\end{proof}


\subsection{Lower bound for matchings}
Here we prove Theorem~\ref{veta_parovani}.

Let $r\ge 3$  and let $R_r\mathrel{\mathop:}=\R(K_r)-1$. 
We construct a sequence of ordered matchings $\mathcal{M}_{r,k}$, $k \ge 1$, with $n_{r,k}$ vertices and a sequence of $2$-colorings $c_{r,k}$ of ordered complete graphs $\mathcal{K}_{N_{r,k}}$ such that $c_{r,k}$ avoids $\mathcal{M}_{r,k}$. Then we choose $k(r)$ so that $n_{r,k(r)}$ is exponential in $r$. This will imply that $N_{r,k(r)}$ is superpolynomial in $n_{r,k(r)}$ when $r$ grows to infinity. 

First we show an inductive construction of the colorings $c_{r,k}$.
Let $N_{r,1} \mathrel{\mathop:}= R_r$ and let $c_{r,1}$ be a $2$-coloring of $\mathcal{K}_{N_{r,1}}$ avoiding $\mathcal{K}_r$.
Let $k\ge 1$ and suppose that a coloring $c_{r,k}$ of $\mathcal{K}_{N_{r,k}}$ has been constructed.
Let $N_{r,k+1}\mathrel{\mathop:}= R_r \cdot N_{r,k}$.
Partition the vertex set of $\mathcal{K}_{N_{r,k+1}}$ into $R_r$ disjoint consecutive intervals $I_1,\allowbreak I_2,\allowbreak \dots,\allowbreak I_{R_r}$, each of size $N_{r,k}$.
Color the complete subgraph induced by each $I_i$ by $c_{r,k}$.
The remaining edges of $\mathcal{K}_{N_{r,k+1}}$ form a complete $R_r$-partite ordered graph $\mathcal{F}_{r,k+1}$, which can be colored to avoid $\mathcal{K}_r$ in the following way.
Suppose that $v_1, v_2, \dots, v_{R_r}$ are the vertices of $\mathcal{K}_{N_{r,1}}$.
Then for every $i,j$, $1\le i<j\le R_r$, and for every edge $e$ of $\mathcal{F}_{r,k+1}$ with one vertex in $I_i$ and the other vertex in $I_j$, let $c_{r,k+1}(e)\mathrel{\mathop:}= c_{r,1}(v_iv_j)$.
Clearly, $N_{r,k}=(R_r)^k$ for every $k\ge 1$.

The matchings $\mathcal{M}_{r,k}$ are also constructed inductively.
We start with constructing $\mathcal{M}_{r,1}$, which serves as a basic building block. Roughly speaking, we expand the vertices of $\mathcal{K}_r$ to form a matching and take $R_r$ shifted copies of this matching; see Figure~\ref{fig_matching_basic}.
More precisely, consider the integers $1,2,\dots, r^2R_r$ as vertices, and let $l_i \mathrel{\mathop:}= (i-1)rR_r$, for $1\le i\le r$.
For every pair $i,j$, where $1\le i<j\le r$, we add the $R_r$ edges $\{l_i+j,l_j+i\},\allowbreak \{l_i+j+r,l_j+i+r\},\allowbreak \{l_i+j+2r,l_j+i+2r\},\allowbreak  \dots,\allowbreak \{l_i+j+(R_r-1)r,l_j+i+(R_r-1)r\}$.
Note that the vertices $l_i + i + mr$, where $1\le i \le r$ and $0 \le m < R_r$, are isolated. After removing these vertices we obtain an ordered matching $\mathcal{M}_{r,1}$ with $t_r \mathrel{\mathop:}= r(r-1)R_r$ vertices. 

\begin{figure}
\centering
\includegraphics{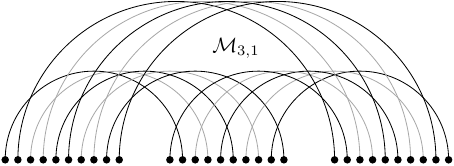} 
\caption{The matching $\mathcal{M}_{3,1}$.}
\label{fig_matching_basic}
\end{figure}

Let $n_{r,1} \mathrel{\mathop:}= t_r$.
Now let $k\ge 1$ and suppose that $\mathcal{M}_{r,k}$ has been constructed.
Let $J_1,\allowbreak L_1,\allowbreak J_2,\allowbreak L_2,\allowbreak \dots,\allowbreak L_{r-1},\allowbreak J_r$ be an ordered sequence of disjoint intervals of vertices, of size $|L_i|=n_{r,k}$ and $|J_i|=(r-1)R_r$.
The matching $\mathcal{M}_{r,k+1}$ is obtained by placing a copy of $\mathcal{M}_{r,k}$ on each of the $r-1$ intervals $L_i$ and a copy of $\mathcal{M}_{r,1}$ on the union of the $r$ intervals $J_i$.
See Figure~\ref{fig_matching_step}.
We have $n_{r,k+1}=(r-1)n_{r,k} + t_r$. 

\begin{figure}
\centering
\includegraphics{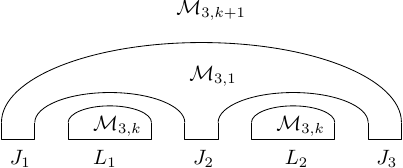} 
\caption{The construction of $\mathcal{M}_{3,k+1}$.}
\label{fig_matching_step}
\end{figure}

Now we show that for every $k$, the coloring $c_{r,k}$ of $\mathcal{K}_{N_{r,k}}$ avoids $\mathcal{M}_{r,k}$.
Trivially, $c_{r,1}$ avoids $\mathcal{M}_{r,1}$ since $n_{r,1}=t_r>R_r=N_{r,1}$.
Let $k\ge 1$ and suppose that $c_{r,k}$ avoids $\mathcal{M}_{r,k}$.
Let $I_1, \dots, I_{R_r}$ be the intervals of vertices of $\mathcal{K}_{N_{r,k+1}}$ from the construction of $c_{r,k+1}$ and let $J_1,\allowbreak L_1,\allowbreak \dots,\allowbreak L_{r-1},\allowbreak J_r$ be the intervals of vertices of $\mathcal{M}_{r,k+1}$ from the construction of $\mathcal{M}_{r,k+1}$.
Let the edges of $\mathcal{K}_{N_{r,k+1}}$ be colored by $c_{r,k+1}$.
Consider a copy of $\mathcal{M}_{r,k+1}$ in $\mathcal{K}_{N_{r,k+1}}$.
If two intervals $J_j$ and $J_{j+1}$ intersect some interval $I_i$, then $L_j \subset I_i$.
Since $L_j$ induces $\mathcal{M}_{r,k}$ in $\mathcal{M}_{r,k+1}$ and $I_i$ induces $\mathcal{K}_{N_{r,k}}$ colored with $c_{r,k}$ in $\mathcal{K}_{N_{r,k+1}}$, the copy of $\mathcal{M}_{r,k+1}$ is not monochromatic by induction.
Thus we may assume that every interval $I_i$ is intersected by at most one interval $J_j$.

Partition each interval $J_j$ into $R_r$ intervals $J_j^1,\allowbreak J_j^2,\allowbreak \dots,\allowbreak J_j^{R_r}$ of length $r-1$, in this order.
At most $R_r-1$ of the $rR_r$ intervals $J_j^l$ contain vertices from at least two intervals $I_i$, $1\le i\le R_r$.
Thus there is an $l$ such that for every $j$, $1\le j\le r$, the whole interval $J_j^l$ is contained in some interval $I_{i(j)}$.
Moreover, all the intervals $I_{i(j)}$ are pairwise distinct by our assumption.
By the construction of $\mathcal{M}_{r,k+1}$, there is exactly one edge $e_{j,j'}$ in $\mathcal{M}_{r,k+1}$ between every pair of intervals $J_j^l$, $J_{j'}^l$. 
By the coloring of $\mathcal{F}_{r,k+1}$, we have $c_{r,k+1}(e_{j,j'}) = c_{r,1}(v_{i(j)}v_{i(j')})$.
Since the edges $v_{i(j)}v_{i(j')}$ form a complete subgraph with $r$ vertices in $\mathcal{K}_{N_{r,1}}$ and $c_{r,1}$ avoids $\mathcal{K}_r$, the copy of $\mathcal{M}_{r,k+1}$ in $\mathcal{K}_{N_{r,k+1}}$ is not monochromatic.
Thus $c_{r,k+1}$ avoids $\mathcal{M}_{r,k+1}$.

Solving the recurrence for $n_{r,k}$, we get 
\[
n_{r,k}= (1+(r-1)+\dots +(r-1)^{k-1})\cdot t_r < (r-1)^k \cdot t_r < r^{k+2} \cdot R_r.
\]

Now we assume that $r$ is sufficiently large and we choose $k(r)$ as follows.
Let $c \mathrel{\mathop:}= (\log{R_r}) / r$, where we recall that $\log$ denotes the base 2 logarithm.
By \eqref{eq_klasicke_odhady_ramsey}, we have $c\in [1/2, 2)$.
Let $k(r)\mathrel{\mathop:}=\lfloor (cr/\log{r})-2\rfloor=(cr/\log{r})-2-\varepsilon$, where $\varepsilon \in [0,1)$.
Let $n \mathrel{\mathop:}= n_{r,k(r)}$, $N \mathrel{\mathop:}= N_{r,k(r)}$ and $\mathcal{M}  \mathrel{\mathop:}=\mathcal{M}_{r,k(r)}$. 
We have
\begin{align*}
n&=n_{r,k(r)}<r^{k(r)+2}\cdot R_r\le 2^{cr+\log R_r}=2^{2cr} \hskip 0.3cm \text{ and } \\ 
N&=N_{r,k(r)}=(R_r)^{k(r)}=2^{crk(r)}>2^{(c^2r^2/\log{r})-3cr}.
\end{align*}
Using these bounds together with the trivial bound $2^{cr}=R_r<n$, we get
\begin{align*}
\log{N}-\frac{\log^2{n}}{5\log\log{n}} &> \frac{c^2r^2}{\log{r}}-3cr - \frac{4c^2r^2}{5(\log{r}+\log{c})} \\
&= c^2r^2 \left( \frac{1}{\log{r}} - \frac{3}{cr} - \frac{4}{5(\log{r}+\log{c})}  \right)  \\
&>0
\end{align*}
where the last inequality is satisfied for $r>540$.
The theorem follows.

\medskip
We remark that our colorings $c_{r,k}$ of $\mathcal{K}_{N_{r,k}}$ are not constructive, since we use the probabilistic lower bound from Ramsey's theorem.

\section{Upper Bounds}\label{section_upper_bounds}

\subsection{Proof of Theorem~\ref{veta_rozlozitelne}}
\label{section_proofRozlozitelne}

We prove the following general form of Theorem~\ref{veta_rozlozitelne}, which allows us to use induction.

\begin{theorem}
\label{veta_rozlozitelne_dvojrozmerna}
For fixed positive integers $k$, $q \ge 2$ and $(k,q)$-decomposable ordered graphs $\mathcal{G}$ and $\mathcal{H}$ with $r$ and $s$ vertices, respectively, we have 
\[
\Ro(\mathcal{G},\mathcal{H})\le C_k \cdot 2^{64k(\lceil \log_{q/(q-1)}{r} \rceil+\lceil \log_{q/(q-1)}{s} \rceil)}
\] 
where $C_k$ is a sufficiently large constant with respect to $k$.
\end{theorem}

\begin{lemma}
\label{lemma_neighbors}
Let the edges of $\mathcal{K}_N$ be colored red and blue.
Then there is a set $U$ with at least $\lfloor N/(16\cdot10^5) \rfloor$ vertices of $\mathcal{K}_N$ satisfying at least one of the following conditions:
\begin{enumerate}[label=(\alph*)]
\item\label{item-lemNeigh1} every vertex of $U$ has at least $N/11$ blue neighbors to the left and $N/11$ blue neighbors to the right of $U$,
\item\label{item-lemNeigh2} every vertex of $U$ has at least $N/11$ red neighbors to the left and $N/11$ red neighbors to the right of $U$.
\end{enumerate}
\end{lemma}

\begin{proof}
We assume that $N \ge 16\cdot10^5$, otherwise the statement is trivial. We define the following two conditions for a vertex $v$ of $\mathcal{K}_N$:
\begin{enumerate}[label=(\roman*)]
\item\label{item-neigh1} $v$ has at least $\frac{20}{217}N$ blue left and at least $\frac{20}{217}N$ blue right neighbors,
\item\label{item-neigh2} $v$ has at least $\frac{20}{217}N$ red left and at least $\frac{20}{217}N$ red right neighbors.
\end{enumerate}
First, we show that there is a set $W$ with at least $N/2000$ vertices such that either every vertex of $W$ satisfies~\ref{item-neigh1} or every vertex of $W$ satisfies~\ref{item-neigh2}.
Let $B$ be the set of vertices of $\mathcal{K}_N$ that satisfy the condition~\ref{item-neigh1} and let $R$ be the set of vertices of $\mathcal{K}_N$ that satisfy \ref{item-neigh2}.
Suppose that $|B|<N/2000$ and $|R|<N/2000$, otherwise we are done.

Let $\mathcal{K'}$ be the ordered complete graph obtained from $\mathcal{K}_N$ by removing the vertices of $B \cup R$.
From the assumptions $\mathcal{K'}$ has more than $(1-\frac{2}{2000})N=\frac{999}{1000}N$ vertices and contains no monochromatic ordered star $\mathcal{S}_{t,t}$ for $t  \mathrel{\mathop:}=  \left\lceil\frac{20}{217}N\right\rceil+1$.
Therefore $\mathcal{K'}$ has fewer than $\Ro(\mathcal{S}_{t,t},\mathcal{S}_{t,t})$ vertices.

Using Theorem~\ref{thmStarsPair} and the fact that $\Ro(\mathcal{S}_{t,1},\mathcal{S}_{t,t}) = \Ro(\mathcal{S}_{1,t},\mathcal{S}_{t,t})$, we have
\begin{align*}
\Ro(\mathcal{S}_{t,t},\mathcal{S}_{t,t}) & = \Ro(\mathcal{S}_{t,1},\mathcal{S}_{t,t}) + \Ro(\mathcal{S}_{1,t},\mathcal{S}_{t,t})-1 = 2(\Ro(\mathcal{S}_{1,t},\mathcal{S}_{t,1})+2t-3)-1 \\
& = 2\left(\left\lfloor\frac{-1+\sqrt{1+8(t-2)^2}}{2} \right\rfloor + 4t -5\right)-1 < (8 + 2\sqrt{2})t.
\end{align*}

Altogether we have 
$|V(\mathcal{K'})|< (8+2\sqrt{2})(\left\lceil\frac{20}{217}N\right\rceil+1) < \frac{999}{1000}N < |V(\mathcal{K'})|$, a contradiction.
Thus there is a set $W$ such that all its vertices satisfy one of the two conditions, say,~\ref{item-neigh1}.

Now, we find the set $U$ as a subset of $W$.
To do so, we partition the vertex set of $\mathcal{K}_N$ into $\frac{16\cdot10^5}{2000} = 800$ intervals $I_1,\dots,I_{800}$ such that each contains at least $\lfloor N/(16\cdot10^5)\rfloor$ vertices of $W$.
This is possible as $|W|\ge N/2000$.
Clearly, there is an interval $I_i$ with at most $N/800$ vertices of $\mathcal{K}_N$.
We set $U  \mathrel{\mathop:}= I_i \cap W$.

Since every vertex of $U$ has at least $\frac{20}{217}N$ blue left neighbors, it also has at least $\frac{20}{217}N-N/800 > N/11$ blue neighbors to the left of $I_i$ and thus to the left of $U$. Analogously, every vertex of $U$ has at least $N/11$ blue neighbors to the right of $U$. Therefore, $U$ satisfies condition~\ref{item-lemNeigh1} of the lemma.
\end{proof}

We use the following two classical results further in the proof.
The K\H{o}v\'{a}ri--S\'{o}s--Tur\'{a}n theorem~\cite{kovari54} gives an upper bound on the maximum number of edges in a bipartite graph that contains no copy of a given complete bipartite graph.

\begin{theorem}[\cite{boll04, hylten58, kovari54}]
\label{thmKovariTuranSos}
Let $Z(m,n;s,t)$ be the maximum number of edges in a bipartite graph $G=(A \cup B, E)$ with $|A|=m$ and $|B|=n$ that does not contain $K_{s,t}$ as a subgraph with $s$ vertices in $A$ and $t$ vertices in $B$. Assuming $2\le s \le m$ and $2 \le t \le n$,  we have 
\[
Z(m,n;s,t) < (s-1)^{1/t}(n-t+1)m^{1-1/t}+(t-1)m < s^{1/t}nm^{1-1/t}+tm.
\]
\end{theorem}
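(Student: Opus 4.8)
The statement is the classical Kővári--Sós--Turán bound on the Zarankiewicz function, and I would prove it by the standard double-counting-plus-convexity argument, taking a little extra care with the binomial estimates in order to reach the precise form involving $(n-t+1)$ rather than just $n$.

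First I would fix a bipartite graph $G=(A\cup B,E)$ with $|A|=m$, $|B|=n$, and $e:=|E|$ edges that contains no $K_{s,t}$ with the $s$-side in $A$ and the $t$-side in $B$, and count the pairs $(a,T)$ where $a\in A$ and $T$ is a $t$-element subset of the neighbourhood $N(a)\subseteq B$. Summing over $a$ gives $\sum_{a\in A}\binom{d(a)}{t}$, while summing over $T\in\binom{B}{t}$ gives $\sum_{T}|\{a\in A:T\subseteq N(a)\}|$; since no $t$-element subset of $B$ can have $s$ common neighbours in $A$, this second sum is at most $(s-1)\binom{n}{t}$. Hence $\sum_{a\in A}\binom{d(a)}{t}\le(s-1)\binom{n}{t}$.

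Next, write $\bar d:=e/m$ for the average degree on the $A$-side. If $\bar d<t-1$ then $e<(t-1)m$ and the claimed bound holds trivially, since its second summand alone already exceeds $e$; so assume $\bar d\ge t-1$. Using the convexity on $[t-1,\infty)$ of the generalized binomial coefficient $x\mapsto\binom{x}{t}=\frac{1}{t!}\prod_{i=0}^{t-1}(x-i)$ (extended by $0$ for $x\le t-1$, where it has a convex kink), Jensen's inequality gives $\sum_{a\in A}\binom{d(a)}{t}\ge m\binom{\bar d}{t}$. Combining this with the previous paragraph and expanding the products yields
\[
\prod_{i=0}^{t-1}\frac{\bar d-i}{n-i}\;=\;\frac{\binom{\bar d}{t}}{\binom{n}{t}}\;\le\;\frac{s-1}{m}.
\]

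The one slightly nonobvious step, and the place I expect to spend the most care, is turning this product inequality into a clean bound on $\bar d$. Here I would observe that for every $i$ with $0\le i\le t-1$ one has $\frac{\bar d-i}{n-i}\ge\frac{\bar d-t+1}{n-t+1}$: cross-multiplying (all denominators are positive since $n\ge t$) this rearranges exactly to $(i-t+1)(\bar d-n)\ge 0$, which holds because $i\le t-1$ and $\bar d\le n$; and all numerators are nonnegative thanks to $\bar d\ge t-1$. Consequently $\bigl(\tfrac{\bar d-t+1}{n-t+1}\bigr)^{t}\le\frac{s-1}{m}$, so taking $t$-th roots and rearranging gives $\bar d\le(s-1)^{1/t}(n-t+1)m^{-1/t}+t-1$, and multiplying by $m$ produces $e\le(s-1)^{1/t}(n-t+1)m^{1-1/t}+(t-1)m$. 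Finally, to obtain the strict inequality I would note that equality in the whole chain would force both Jensen to be tight (all $d(a)$ equal) and each factor bound to be tight, which forces $\bar d=n$, i.e.\ $G=K_{m,n}$; but for $s\le m$ and $t\le n$ this graph contains $K_{s,t}$, a contradiction, so $Z(m,n;s,t)<(s-1)^{1/t}(n-t+1)m^{1-1/t}+(t-1)m$.
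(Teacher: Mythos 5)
Your proof is correct. The paper does not actually prove Theorem~\ref{thmKovariTuranSos} --- it imports it as a classical result from the cited references --- and your argument is precisely the standard double-counting-plus-convexity proof found there: the pair count $\sum_{a\in A}\binom{d(a)}{t}\le (s-1)\binom{n}{t}$, Jensen, the factor-by-factor comparison $\frac{\bar d-i}{n-i}\ge\frac{\bar d-t+1}{n-t+1}$ that correctly produces the $(n-t+1)$ term, and the equality analysis that yields the strict inequality are all sound.
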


Erd\H{o}s and Szekeres proved the following upper bound on off-diagonal Ramsey numbers of complete graphs.

\begin{theorem}[\cite{ErSz35_a_comb_problem}]
\label{theorem_rams_off_diagonal}
For every $r,s \ge 2$, we have $\R(K_r,K_s) \le \binom{r+s-2}{r-1} \le (r+s)^r \le (rs)^r.$
\end{theorem}

By Observation~\ref{obs_uplnaky}, we have the same upper bound for the ordered Ramsey numbers $\Ro(\mathcal{K}_r,\mathcal{K}_s)$.

\paragraph{Proof of Theorem~\ref{veta_rozlozitelne_dvojrozmerna}.}
Let $\mathcal{G}$ and $\mathcal{H}$ be $(k,q)$-decomposable ordered graphs with $r$ and $s$ vertices, respectively. 
Let $N =  N_{k,q}(r,s)\mathrel{\mathop:}= C_k \cdot 2^{64k(\lceil \log_{q/(q-1)} r \rceil+\lceil \log_{q/(q-1)} s \rceil)}$ where $C_k$ is a constant sufficiently large with respect to $k$. Assume that the edges of $\mathcal{K}_N$ are colored red and blue.
We show that there is a blue copy of $\mathcal{G}$ or a red copy of $\mathcal{H}$ in $\mathcal{K}_N$. We proceed by induction on $\lceil \log_{q/(q-1)}{r}\rceil + \lceil \log_{q/(q-1)}{s} \rceil$.
For the induction basis, we assume that either $\lceil\log_{q/(q-1)}{r}\rceil=0$ or $\lceil\log_{q/(q-1)}{s}\rceil=0$.
In these cases we have $r =1$ or $s =1$, respectively, and the statement is trivial.

Now assume that the theorem is true for every pair $\mathcal{G}'$, $\mathcal{H}'$ of $(k,q)$-decompo\-sable ordered graphs with $r'$ and $s'$ vertices, respectively, such that $\lceil\log_{q/(q-1)}{r'}\rceil + \lceil\log_{q/(q-1)}{s'}\rceil <\lceil\log_{q/(q-1)}{r}\rceil + \lceil\log_{q/(q-1)}{s}\rceil$.

Let $U$ be the subset of vertices of $\mathcal{K}_N$ from Lemma~\ref{lemma_neighbors}.
Without loss of generality, we assume that $U$ satisfies part~\ref{item-lemNeigh1} of the lemma.
That is, $U$ has at least $\lfloor N/(16\cdot10^5) \rfloor$ vertices such that each of them has at least $N/11$ blue neighbors to the left and $N/11$ blue neighbors to the right of $U$.

By Theorem~\ref{theorem_rams_off_diagonal}, there is a blue copy of $\mathcal{K}_{61k}$ or a red copy of $\mathcal{K}_s$ in $\mathcal{K}_N[U]$ if $\lvert U\rvert\ge (61ks)^{61k}$.
This condition is satisfied if $C_k\ge 16\cdot 10^5\cdot(61k)^{61k}$, since 
$(61ks)^{61k}  \le (61k)^{61k}\cdot 2^{61k\cdot\log s} \le (61k)^{61k}\cdot 2^{61k(\log_{q/(q-1)} s)}$.
If $\mathcal{K}_N[U]$ contains a red copy of $\mathcal{K}_s$, we are done.
Thus, assume that $\mathcal{K}_N[U]$ contains a blue copy of $\mathcal{K}_{61k}$, and let $U_1 \subset U$ be its vertex set.

Next we will apply Theorem~\ref{thmKovariTuranSos} to obtain a set $U_2 \subset U_1$ of size $6k$ whose vertices have at least $N/2^{64k}$ common blue neighbors to the left of $U$.
Then we apply Theorem~\ref{thmKovariTuranSos} again to obtain a set $V \subset U_2$ of size $k$ whose vertices have at least $N/2^{64k}$ common blue neighbors to the right of $U$.

Let $J_L$ be the interval of vertices of $\mathcal{K}_N$ that are to the left of $U$ and $J_R$ the interval of vertices of $\mathcal{K}_N$ that are to the right of $U$.
By the construction of $U$, we have the trivial bound $|J_L|,|J_R| \ge N/11$, and thus $|J_L|,|J_R|\le 10N/11$.
Without loss of generality, we assume that $|J_R| \le N/2$.

Since $|J_L| \le 10N/11$, the number of blue edges between $J_L$ and $U_1$ is at least $(N/11)\cdot |U_1| \ge |J_L|\cdot|U_1|/10$.
By Theorem~\ref{thmKovariTuranSos}, we have 
\begin{align*}
Z(|J_L|,|U_1|;|J_L|/2^{60k},6k) &< (|J_L|/2^{60k})^{1/(6k)}\cdot 61k \cdot|J_L|^{1-1/(6k)}+6k\cdot|J_L| \\
&= |J_L|\cdot (61k/2^{10} + 6k) \le |J_L|\cdot 61k/10 = \frac{|J_L|\cdot|U_1|}{10}.
\end{align*}
Thus, there is a blue complete bipartite graph between at least $|J_L|/2^{60k}$ vertices in $J_L$ and $6k$ vertices in $U_1$. These $6k$ vertices form the set $U_2$.

Since $|J_R| \le N/2$, the number of blue edges between $U_2$ and $J_R$ is at least $(N/11) \cdot |U_2|\ge |U_2|\cdot |J_R|\cdot 2 / 11$.
By Theorem~\ref{thmKovariTuranSos}, we have 
\begin{align*}
Z(|J_R|,|U_2|;|J_R|/2^{7k},k) &< (|J_R|/2^{7k})^{1/k}\cdot 6k \cdot|J_R|^{1-1/k}+k\cdot|J_R| \\
&= |J_R|\cdot (6k/2^{7} + k) \le |J_R|\cdot 6k \cdot 2/11 = \frac{2|J_R|\cdot|U_2|}{11}.
\end{align*}
Thus, there is a blue complete bipartite graph between at least $|J_R|/2^{7k}$ vertices in $J_R$ and $k$ vertices in $U_2$. These $k$ vertices form the set $V$. Since $|J_L|,|J_R| \ge N/11$, the vertices of $V$ have at least $N/(2^{60k}\cdot11)>N/2^{64k}$ common blue neighbors to the left of $V$ and at least $N/(2^{7k}\cdot11)>N/2^{64k}$ common blue neighbors to the right of $V$.

Since $\mathcal{G}$ is $(k,q)$-decomposable, we can partition the vertices of $\mathcal{G}$ into three intervals $I_L$, $I$, and $I_R$ where $0<|I| \le k$ and $|I_L|,|I_R| \le r(q-1) / q$ such that $I$ is to the right of $I_L$ and to the left of $I_R$,
the intervals $I_L$ and $I_R$ induce $(k,q)$-decomposable ordered graphs $\mathcal{G}_L$ and $\mathcal{G}_R$, respectively, and there is no edge between $\mathcal{G}_L$ and $\mathcal{G}_R$. 

From our choice of $N$, we have 
\begin{align*}
N/2^{64k} & =C_k \cdot 2^{64k(\lceil \log_{q/(q-1)}{r} \rceil+\lceil \log_{q/(q-1)}{s} \rceil - 1)}\\
&=C_k \cdot 2^{64k(\lceil \log_{q/(q-1)}{r(q-1)/q} \rceil+\lceil \log_{q/(q-1)}{s} \rceil)}\ge N_{k,q}(\lfloor r(q-1)/q\rfloor,s)
\end{align*}
and so $\Ro(\mathcal{G}_L,\mathcal{H}), \Ro(\mathcal{G}_R,\mathcal{H}) \le N/2^{64k}$.
Therefore, using the inductive assumption, we can find either a blue copy of $\mathcal{G}_L$ or a red copy of $\mathcal{H}$ in the common blue left neighborhood of~$V$.
Similarly, we can find a blue copy of $\mathcal{G}_R$ or a red copy of $\mathcal{H}$ in the common blue right neighborhood of $V$.
Suppose that we do not obtain a red copy of $\mathcal{H}$ in any of these two cases.
Then we find a blue copy of $\mathcal{G}$ by choosing $|I|$ vertices of $V$ as a copy of $I$ and connect them to the blue copies of $\mathcal{G}_L$ and $\mathcal{G}_R$. \qed


\subsection{Proof of Theorem~\ref{thmIntChromDiag}}
\label{section_proofIntChrom}

We derive Theorem~\ref{thmIntChromDiag} as a consequence of a stronger Theorem~\ref{thmIntChrom}, which gives an upper bound on off-diagonal ordered Ramsey numbers of graphs with constant interval chromatic number.

The first step in the proof of  Theorem~\ref{thmIntChrom} is the following lemma, whose proof is motivated by the proof of the upper bound on ordered Ramsey numbers of ordered paths by Cibulka et al.~\cite{cibGao13}.

\begin{lemma}
\label{lemmaIntChrom2}
Let $k,t,n$ be positive integers and let $\mathcal{G}$ be an ordered $k$-degenerate graph on $n$ vertices.
Then $\Ro(\mathcal{G},\mathcal{K}_{t,t}) \le n^2 t^{k+1}$.
\end{lemma}

\begin{proof}
Assume that $\mathcal{G}=(G,\prec)$. Let $N\mathrel{\mathop:}= n^2 t^{k+1}$ and assume that the edges of $\mathcal{K}_N$ are colored red and blue.
We partition the vertices of $\mathcal{K}_N$ into $n$ disjoint consecutive intervals of length $n t^{k+1}$.
The $i$th such interval is denoted by $I(v)$ where $v$ is the $i$th vertex of $\mathcal{G}$ in the ordering $\prec$.

We try to construct a blue copy $h(\mathcal{G})$ of $\mathcal{G}$ in $\mathcal{K}_N$ in $n$ steps.
In each step of the construction we find an image $h(w) \in I(w)$ of a new vertex $w$ of $\mathcal{G}$ or a red copy of $\mathcal{K}_{t,t}$.

For every vertex $v$ of $\mathcal{G}$ that has no image $h(v)$ yet, we keep a set $U(v) \subseteq I(v)$ of possible candidates for $h(v)$.
At the beginning we set $U(v)\mathrel{\mathop:}= I(v)$ for every $v \in V(G)$.
Throughout the proof, we will keep the property that the size of $U(v)$ is a multiple of $t$. 

Let $\lessdot$ be an ordering of the vertices of $\mathcal{G}$ such that every vertex $v$ of $\mathcal{G}$ has at most $k$ left neighbors in $\lessdot$.
This ordering exists as $\mathcal{G}$ is $k$-degenerate.
Note that the ordering $\lessdot$ might differ from the ordering $\prec$.

Let $w$ be the leftmost vertex of $\mathcal{G}$ in the ordering $\lessdot$ that has no image $h(w)$ yet.
Suppose that $u_1,\dots,u_s \in V(G)$ are the right neighbors of $w$ in $\lessdot$.
We show how to find the image $h(w)$ or a red copy of $\mathcal{K}_{t,t}$ in $\mathcal{K}_N$.

Let $i\in [s]$.
We claim that in $U(w)$ every vertex except for at most $t-1$ vertices has at least $|U(u_i)|/t$ blue neighbors in $U(u_i)$ or there is a red copy of $\mathcal{K}_{t,t}$ with edges between $U(w)$ and $U(u_i)$.

Suppose first that there is a subset $W\subseteq U(w)$ of size $t$ such that each vertex of $W$ has fewer than $|U(u_i)| / t$ blue neighbors in $U(u_i)$.
In such a case we delete from $U(u_i)$ every vertex that is a blue neighbor of some vertex of $W$.
Afterwards, there are still at least 
\[|U(u_i)|-|W|\cdot \left(\frac{|U(u_i)|}{t}-1\right) = |U(u_i)| - t\cdot \left(\frac{|U(u_i)|}{t}-1\right) = t\]
vertices left in $U(u_i)$ and every such vertex has only red neighbors in $W$.
Thus we have a red copy of $\mathcal{K}_{t,t}$ in $\mathcal{K}_N$.

By our claim, there is a red copy of $\mathcal{K}_{t,t}$ in $\mathcal{K}_N$ or a set $Z(w) \subseteq U(w)$ of size at least $|U(w)|-s(t-1)>|U(w)|-nt$ such that for every $i\in [s]$, every vertex of $Z(w)$ has at least $|U(u_i)| / t$ blue neighbors in $U(u_i)$.
We may assume that the latter case occurs, as otherwise we are done.

We choose an arbitrary vertex $h(w)$ of $Z(w)$ to be the image of $w$ in the constructed blue copy $h(\mathcal{G})$ of $\mathcal{G}$.
For this we need to know that $Z(w)$ is nonempty; we show this at the end of the proof.
For every $i\in [s]$, we update the set $U(u_i)$ to be a set of $|U(u_i)|/t$ blue neighbors of $h(w)$ in $U(u_i)$. 

After these updates, we choose the first vertex in $\lessdot$ that does not have an image yet and proceed with the next step. If every vertex of $\mathcal{G}$ has an image, then we have found a blue copy of $\mathcal{G}$.

It remains to show that the set $Z(w)$ is nonempty in each step.
Since $w$ has at most $k$ left neighbors in $\lessdot$, we have updated $U(w)$ at most $k$ times.
The size of $U(w)$ is initially $nt^{k+1}$ and it is divided by $t$ in every update.
Thus, in the end, $|U(w)|\ge nt$. Consequently, $|Z(w)|>|U(w)|-nt\ge 0$.
\end{proof}

Let $\mathcal{K}_p(n)$ be the ordered complete $p$-partite graph with parts of size $n$ forming consecutive intervals.

\begin{theorem}
\label{thmIntChrom}
Let $k$, $n$, and $p \geq 2$ be positive integers and let $\mathcal{G}$ be an ordered $k$-degenerate graph on $n$ vertices.
Then 
\[\Ro(\mathcal{G},\mathcal{K}_p(n)) \le n^{(1+2/k)(k+1)^{\lceil \log{p}\rceil}-2/k}.\]
\end{theorem}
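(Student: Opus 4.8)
The plan is to induct on the interval chromatic number $p$, using Lemma~\ref{lemmaIntChrom2} as the base case and a "divide the intervals in half" argument for the inductive step. For $p \le 2$ the graph $\mathcal{G}$ has all its edges going between the left half and the right half of its vertex ordering (or is edgeless), and one checks that Lemma~\ref{lemmaIntChrom2} applied with $N \mathrel{\mathop:}= n^{(1+2/k)(k+1)-2/k} = n^{3+1/k}$ gives $t = (N/n^2)^{1/(k+1)} = n$, so that a complete ordered graph on $N$ vertices with a $2$-coloring either contains a blue $\mathcal{G}$ or a red $\mathcal{K}_{n,n} \supseteq \mathcal{K}_n(2)$; this matches the claimed bound since $\lceil \log 2 \rceil = 1$. (One must be slightly careful that a $k$-degenerate ordered graph of interval chromatic number $2$ really is covered by Lemma~\ref{lemmaIntChrom2}: its vertices split into two intervals with all edges across, which is exactly the bipartite-across-a-cut situation the lemma handles after assigning the first $n_1$ vertices to intervals in the left block and the rest to the right block; since $\mathcal{K}_n(2)$ has a monotone-across structure containing $\mathcal{K}_{n,n}$ this is fine.)

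For the inductive step, suppose $p \ge 3$ and the statement holds for all smaller interval chromatic numbers, in particular for $\lceil p/2 \rceil$. Let $\mathcal{G}$ have interval chromatic number $p$, witnessed by a partition of its vertex ordering into consecutive blocks $B_1 \lhd B_2 \lhd \cdots \lhd B_p$ with no edges inside any $B_i$. Split the blocks into a left group $\mathcal{G}_L$ induced by $B_1,\dots,B_{\lceil p/2 \rceil}$ and a right group $\mathcal{G}_R$ induced by $B_{\lceil p/2 \rceil + 1},\dots,B_p$; each has interval chromatic number at most $\lceil p/2 \rceil$ and is $k$-degenerate, and $\mathcal{G}$ is obtained from $\mathcal{G}_L$ and $\mathcal{G}_R$ by adding edges only across the cut between them. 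Set $m \mathrel{\mathop:}= n^{(1+2/k)(k+1)^{\lceil \log(\lceil p/2\rceil)\rceil}-2/k}$, the bound the inductive hypothesis gives for graphs on $n$ vertices with interval chromatic number $\lceil p/2 \rceil$. I would then take $N$ to be roughly $m^2 \cdot n^{\,?}$ chosen so that $t \mathrel{\mathop:}= (N/m^2)^{1/(k+1)}$ is at least the value needed at the next level up — concretely, apply Lemma~\ref{lemmaIntChrom2}-style reasoning but with the "interval" structure forced to respect the cut in $\mathcal{G}$: partition $\mathcal{K}_N$ into two halves; inside the left half, using the inductive hypothesis, either find a blue $\mathcal{G}_L$ together with many red-robust candidate vertices, or a red $\mathcal{K}_n(p)$; then extend across the cut into the right half to place $\mathcal{G}_R$, again via the inductive hypothesis, while controlling the red neighborhoods so that either the copy closes up to a blue $\mathcal{G}$ or a red $\mathcal{K}_{t,t}$ (hence $\mathcal{K}_n(2)$, and more generally enough red structure to assemble $\mathcal{K}_n(p)$) appears. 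The recursion $f(p) = (k+1)\cdot f(\lceil p/2\rceil) + O(\text{const exponent})$ on the exponent of $n$ unfolds to $(1+2/k)(k+1)^{\lceil \log p\rceil} - 2/k$ after $\lceil \log p\rceil$ halvings, which is where the exact shape of the exponent comes from; the additive $2/k$ and the prefactor $(1+2/k)$ are exactly the fixed points that make the recursion close cleanly, so I would verify $(1+2/k)(k+1)^{j} - 2/k = (k+1)\big((1+2/k)(k+1)^{j-1}-2/k\big) + 2(k+1)/k - 2/k = (k+1)\big((1+2/k)(k+1)^{j-1}-2/k\big) + 2$, i.e. each halving step costs a factor $k+1$ in the exponent plus a constant $+2$ slack, matching the $n^2$ loss in Lemma~\ref{lemmaIntChrom2}.

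The main obstacle I anticipate is the bookkeeping in the inductive step: Lemma~\ref{lemmaIntChrom2} is stated for $\mathcal{G}$ versus $\mathcal{K}_{t,t}$, not for the asymmetric target $\mathcal{K}_n(p)$, so the clean move is to restructure the induction so that at every level the "red side" target stays $\mathcal{K}_{t,t}$-like and only at the end is reassembled into $\mathcal{K}_n(p)$; alternatively, strengthen the inductive hypothesis to produce, alongside a blue copy of the current subgraph, a large interval of vertices with controlled red degrees into the not-yet-embedded part, so that the red $\mathcal{K}_{t,t}$'s found at successive levels can be glued into the $p$ blocks of $\mathcal{K}_n(p)$. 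Getting the quantifiers on $t$ right across $\lceil \log p\rceil$ levels — ensuring the $t$ handed up from level $j$ is large enough to serve as the $N$-to-$n$ ratio demanded at level $j+1$ — is the delicate part, but it is exactly the $n^2$-per-level loss that the exponent formula is designed to absorb, so I expect it to go through with careful constant-chasing rather than any new idea.
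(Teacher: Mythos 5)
Your base case and your recurrence on the exponent are essentially correct (modulo the arithmetic slip: $(1+2/k)(k+1)-2/k = k+3$, not $3+1/k$; and note that Lemma~\ref{lemmaIntChrom2} applies to \emph{any} $k$-degenerate ordered graph with $n$ vertices, so the worry about whether interval chromatic number $2$ is "covered" is moot). The genuine gap is in your inductive step: you recurse on the wrong side of the coloring. You propose to split the \emph{blue} target $\mathcal{G}$ into $\mathcal{G}_L$ and $\mathcal{G}_R$ and embed each half separately in the two halves of $\mathcal{K}_N$, but then you must realize all the cross edges of $\mathcal{G}$ between the two blue sub-copies, and the inductive hypothesis gives you no control whatsoever over which vertices the copy of $\mathcal{G}_L$ lands on or what their blue neighborhoods in the right half look like. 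Repairing this would require a substantially stronger "robust embedding" statement (many copies of $\mathcal{G}_L$, or a copy whose image has large common blue neighborhoods to the right), which you acknowledge as the delicate part but do not supply; it is not constant-chasing, it is the missing idea, and the recurrence $f(p)=(k+1)f(\lceil p/2\rceil)+2$ does not obviously fall out of such a scheme.

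The paper's proof keeps $\mathcal{G}$ intact at every level and recurses on the \emph{red} target instead. With $N=f_{k,n}(l)=n^2\cdot f_{k,n}(l-1)^{k+1}$, Lemma~\ref{lemmaIntChrom2} applied to the whole graph $\mathcal{G}$ yields either a blue $\mathcal{G}$ (done) or a red $\mathcal{K}_{t,t}$ with $t=f_{k,n}(l-1)$. Each class $A$, $B$ of this red $\mathcal{K}_{t,t}$ is large enough that, by induction, it contains a blue $\mathcal{G}$ or a red $\mathcal{K}_n(p/2)$; two red copies of $\mathcal{K}_n(p/2)$, one in $A$ and one in $B$, together with the entirely red bipartite graph between $A$ and $B$, assemble into a red $\mathcal{K}_n(p)$. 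No cross-edge problem arises, because the object being glued, $\mathcal{K}_n(p)$, is by definition two copies of $\mathcal{K}_n(p/2)$ joined completely, whereas $\mathcal{G}$ is not. Your closing remark about gluing the red $\mathcal{K}_{t,t}$'s into the blocks of $\mathcal{K}_n(p)$ is the right instinct --- but it is the entire proof, not a fallback option.
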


\begin{proof}
First, we define a function $f_{k,n}(q) \colon \mathbb{N} \to \mathbb{N}$ as
\[
f_{k,n}(q)\mathrel{\mathop:}= n^{(1+2/k)(k+1)^q-2/k}.
\]
This function satisfies the recurrence $f_{k,n}(1)=n^{k+3}$ and $f_{k,n}(q)=n^2\cdot (f_{k,n}(q-1))^{k+1}$ for every integer $q \ge 2$.

We assume without loss of generality that $p=2^q$ for some positive integer $q$.
We proceed by induction on $q$.
The case $q=1$ follows immediately from Lemma~\ref{lemmaIntChrom2} applied with $t\mathrel{\mathop:}=n$.

Now let $q\ge 2$.
Let $\mathcal{K}_N$ be an ordered complete graph with $N\mathrel{\mathop:}= f_{k,n}(q)$ vertices and edges colored red and blue.
We show that there is always a blue copy of $\mathcal{G}$ or a red copy of $\mathcal{K}_p(n)$ in $\mathcal{K}_N$.

According to Lemma~\ref{lemmaIntChrom2}, there is a blue copy of $\mathcal{G}$ or a red copy of $\mathcal{K}_{t,t}$ for $t\mathrel{\mathop:}= f_{k,n}(q-1)$.
In the first case we are done, thus we assume that the latter case occurs.
Let $A$ be the left part of size $t$ and $B$ the right part of size $t$ in the red copy of $\mathcal{K}_{t,t}$.

Since the induced ordered subgraph $\mathcal{K}_N[A]$ has $f_{k,n}(q-1)$ vertices, there is a blue copy of $\mathcal{G}$ or a red copy of $\mathcal{K}_{p/2}(n)$ in $\mathcal{K}_N[A]$ by the inductive assumption.
An analogous statement holds for the ordered subgraph $\mathcal{K}_N[B]$. 

Thus, if there is no blue copy of $\mathcal{G}$ in $\mathcal{K}_N[A]$ and in $\mathcal{K}_N[B]$, then the two red copies of $\mathcal{K}_{p/2}(n)$ together with the red edges between $\mathcal{K}_N[A]$ and $\mathcal{K}_N[B]$ form a red copy of $\mathcal{K}_p(n)$ in~$\mathcal{K}_N$.
\end{proof}

\section{Fixed ordered graph, variable number of colors}
\label{section_variable_c}

Here we discuss the asymptotics of ordered Ramsey numbers $\Ro(\mathcal{G}; c)$ of a fixed ordered graph $\mathcal{G}$ as a function of the number of colors.
That is, for the rest of the section we assume that $\mathcal{G}$ is a fixed ordered graph and that the number $c$ of colors can be arbitrarily large.

The unordered Ramsey numbers are at most polynomial for bipartite graphs and at least exponential other\-wise; this follows from the K\H{o}v\'{a}ri--S\'{o}s--Tur\'{a}n theorem (Theorem~\ref{thmKovariTuranSos}) and from the exis\-tence of a decomposition of $K_n$ into $\lceil\log{n}\rceil$ bipartite subgraphs, respectively.
For ordered Ramsey numbers we observe a similar dichotomy, but the characterization is more subtle.

An ordered graph $\mathcal{G}$ is \emph{separable} if the vertex set of $\mathcal{G}$ can be partitioned into two nonempty intervals $I_1,I_2$ such that there is no edge between $I_1$ and $I_2$. An ordered graph is \emph{nonseparable} if it is not separable.

We find that $\Ro(\mathcal{G}; c)$ is exponential in $c$ if $\mathcal{G}$ contains a nonseparable ordered graph with interval chromatic number $3$, and polynomial otherwise. Moreover, there are only finitely many minimal nonseparable ordered graphs with interval chromatic number at least $3$. Therefore, the class of ordered graphs with polynomial ordered Ramsey numbers can be characterized by a finite number of forbidden ordered subgraphs.

\begin{figure}
\centering
 \includegraphics{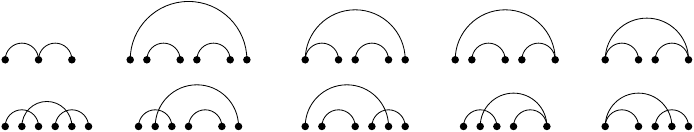} 
\caption{Minimal nonseparable ordered graphs with interval chromatic number at least $3$.}
\label{fig_dichotomy}
\end{figure}

\paragraph{Proof of Theorem~\ref{thm-dichotomy}.}
For part~\ref{item-dichotomy1}, let $\mathcal{G}$ be a given $n$-vertex ordered graph contained in $n\cdot \mathcal{K}_{n,n}$.
For $N \mathrel{\mathop:}= (2cn)^{n+1}$, let the edges of $\mathcal{K}_N$ be colored with $c$ colors.
We find a monochromatic copy of $\mathcal{G}$.

For $t \mathrel{\mathop:}= cn$, we partition the vertex set of $\mathcal{K}_N$ into $2t$ intervals $A_1,B_1,\dots,A_t,\allowbreak B_t$ in this order, such that each interval has size $K \mathrel{\mathop:}=(2cn)^n$.
For every $i=1,\ldots,t$, it follows from the pigeonhole principle that there is a color $c_i$ that colors at least $K^2/c$ edges of $\mathcal{\mathcal{K}_N}[A_i \cup B_i]$.

By Theorem~\ref{thmKovariTuranSos}, we have $Z(K,K;n,n) < 2nK^{2-1/n}= K^2/c$.
Consequently, for every $i=1,\ldots,t$, there is a copy of $\mathcal{K}_{n,n}$ of color $c_i$ in $\mathcal{\mathcal{K}_N}[A_i \cup B_i]$.
By the pigeonhole principle, we have a monochromatic copy of $n \cdot \mathcal{K}_{n,n}$.
Since $\mathcal{G} \subseteq n\cdot \mathcal{K}_{n,n}$, we have a monochromatic copy of $\mathcal{G}$ as well.

To prove part~\ref{item-dichotomy2}, we first show that if $\mathcal{G}$ is not contained in $n \cdot \mathcal{K}_{n,n}$, then $\mathcal{G}$ contains one of the ordered graphs from Figure~\ref{fig_dichotomy}.

The ordered graph $\mathcal{G}$ contains a nonseparable ordered graph $\mathcal{H}$ with interval chromatic number $t \ge 3$, since $\mathcal{G}$ is not an ordered subgraph of $n \cdot \mathcal{K}_{n,n}$.
Let $I_1,\ldots,I_t$ be a partitioning of the vertex set of $\mathcal{H}$ into $t$ consecutive  intervals such that there is no edge of $\mathcal{H}$ with both vertices in the same interval.
Then $\mathcal{H}$ has an edge $e$ between intervals $I_1$ and $I_2$ and an edge $f$ between intervals $I_2$ and $I_3$.
If $e$ and $f$ share a vertex, they form a monotone path on three vertices, which is the first ordered graph in Figure~\ref{fig_dichotomy}.

Assume that no vertex of $I_2$ has a neighbor in both $I_1$ and $I_3 \cup \cdots \cup I_t$.
Then we partition $I_2$ into sets $A_1$, $A_2$, and $A_3$ such that every vertex of $A_1$ has a neighbor in $I_1$, no vertex in $A_2$ has a neighbor in $I_1 \cup I_3$, and every vertex of $A_3$ has a neighbor in $I_3$.
If $A_3$ is to the left of $A_1$, then we can move some vertices of $I_2$ into $I_1$ and some into $I_3$ to obtain a partitioning of the vertex set of $\mathcal{H}$ into $t-1$ intervals such that there is no edge with both vertices in the same interval.
This is impossible, as the interval chromatic number of $\mathcal{H}$ is~$t$.
Thus we can assume that the vertex in $e \cap I_2$ is to the left of the vertex in $f \cap I_2$ and that every vertex between $e \cap I_2$ and $f \cap I_2$ lies in $A_2$.

Since $\mathcal{H}$ is nonseparable, there is an edge $g$ of $\mathcal{H}$ with one vertex to the left of $e \cap I_2$ and the other one to the right of $f \cap I_2$.
The left vertex of $g$ either lies to the left of $e \cap I_1$, or is in $e \cap I_1$, or lies between $e \cap I_1$ and $e \cap I_2$.
Similarly, the right vertex of $g$ is either to the right of $f \cap  I_3$, or is in $f \cap I_3$ or lies between $f \cap I_3$ and $f \cap I_2$.
This gives us nine pairwise nonisomorphic ordered graphs formed by the edges $g$, $e$, and $f$.
Each of these ordered graphs is in Figure~\ref{fig_dichotomy}.

To finish the proof, note that every color in the coloring of $\mathcal{K}_{2^c}$ from the proof of Proposition~\ref{prop_star_lower} with $r_1=\dots=r_c=2=s_1=\dots=s_c$ induces an ordered subgraph of $2^c \cdot \mathcal{K}_{2^c,2^c}$.
In particular, there is no monochromatic copy of~$\mathcal{G}$.
Therefore we have $\Ro(\mathcal{G};c) > 2^c$.
\qed

\paragraph*{} We note that the coloring of $\mathcal{K}_{2^c}$ from the previous proof is an ordered variant of a particular ``optimal'' decomposition of the edges of $K_{2^c}$ into $c$ bipartite graphs.

\section{Open problems}\label{section_open_problems}

Theorem~\ref{thmIntChromDiag} implies that ordered Ramsey numbers of graphs that have bounded degree and bounded interval chromatic number are polynomial in the number of vertices.
However, we have no nontrivial lower bounds.

\begin{problem}
Is there an absolute constant $c>0$ such that for every fixed $\Delta$ there is a sequence $\{\mathcal{G}_n\}_{n\in\mathbb{N}}$ of ordered $\Delta$-regular graphs $\mathcal{G}_n$ with $n$ vertices and interval chromatic number $2$ such that $\Ro(\mathcal{G}_n) \ge n^{c\Delta}$?
\end{problem}

Similarly, it would be interesting to find some nontrivial lower bounds on ordered Ramsey numbers of ordered graphs of constant bandwidth.
Let $\mathcal{P}^{(p)}_n$ be the ordered graph on $n$ vertices $v_1,\dots,v_n$, in this order, such that $v_iv_j$ is an edge if and only if $0<|i-j|\leq p$. 
In particular, $\mathcal{P}^{(1)}_n=(P_n, \lhd_{mon})$.
Note that every ordered graph with $n$ vertices and with bandwidth at most $p$ is an ordered subgraph of $\mathcal{P}^{(p)}_n$.

\begin{problem}
For an integer $p \geq 2$, what is the growth rate of $\Ro(\mathcal{P}^{(p)}_n)$ with respect to $n$?
\end{problem}

We know that the ordered Ramsey numbers of alternating paths are linear with respect to the number of vertices.
Is it true that these orderings 
minimize ordered Ramsey numbers of ordered paths?

\begin{problem}
For some positive integer $n$, is there an ordering $\mathcal{P}_n$ of the path $P_n$ on $n$ vertices such that $\Ro(\mathcal{P}_n) < \Ro((P_n,\lhd_{alt}))$?
\end{problem}

Finally, we mention a problem with an application in the theory of geometric Ramsey numbers (see the definition in Section~\ref{subsection_motivation}).
A \emph{crossing} in an ordered graph $(G,\prec)$ is a pair of edges $v_iv_k$, $v_jv_l$ such that
$v_i \prec v_j \prec v_k \prec v_l$.
An ordered graph is \emph{noncrossing} if it contains no crossing.

Let $\Rnoncross(n)$ be the largest ordered Ramsey number of a noncrossing ordered graph on $n$ vertices.
Since noncrossing ordered graphs are outerplanar, they are $2$-degenerate, and thus, by a result of Conlon et al.~\cite{conFox14}, we have $\Rnoncross(n) \le n^{O(\log{n})}$.

\begin{problem}
What is the growth rate of $\Rnoncross(n)$? 
In particular, is it polynomial in $n$?
\label{que-crossing}
\end{problem}

It is an open problem whether there is a general polynomial upper bound for geometric Ramsey numbers of outerplanar graphs~\cite{cibGao13}.
The following theorem shows that Problem~\ref{que-crossing} is equivalent to the question of determining the asymptotics of the maximum convex geometric Ramsey number of an outerplanar graph on $n$ vertices.

\begin{theorem}
\label{theorem_relation_ord_convex_geom}
Let $\Rc(n)$ be the maximum convex geometric Ramsey number of an outerplanar graph on $n$ vertices.
For every $n \ge 2$, we have
\[
\Rc(n) \le \Rnoncross(n) \le \Rc(4n-4).
\]
\end{theorem}

\begin{proof}

Let $G$ be an outerplanar graph drawn in the plane so that its vertices are the vertices of a convex $n$-gon, and the edges are drawn as straight-line segments with no crossings.
Let $v_1 \prec v_2 \prec \dots \prec v_n$ be a clockwise ordering of the vertices of $G$ along the $n$-gon with $v_1$ chosen arbitrarily. 
In this way, we obtain a noncrossing ordered graph $\mathcal{G}$.
If we find a monochromatic copy of $\mathcal{G}$ in every $2$-coloring of $\mathcal{K}_N$
for some $N$, we can find a monochromatic noncrossing copy of the graph $G$ in every $2$-coloring
of the complete convex geometric graph on $N$ vertices.
This proves of the first inequality.

Now we prove the second inequality. 
The case $n = 2$ is trivial, so we assume that $n \ge 3$.
Since adding edges to an ordered graph never decreases its ordered Ramsey number, we know that $\Rnoncross(n)$
is attained by a noncrossing ordered graph $\mathcal{G}$ with vertices $v_1 \prec \dots \prec v_n$ that contains
the Hamiltonian cycle $v_1, v_2, \dots, v_n, v_1$.
We form an outerplanar graph $H$ as follows. 
We take four unordered copies $G^{(1)}, \dots, G^{(4)}$ of $\mathcal{G}$.
For every $i\in [4]$, let $v_1^{(i)}, \dots, v_n^{(i)}$ be the set of vertices of $G^{(i)}$.
We identify $v_n^{(1)}$ with $v_n^{(2)}$, $v_1^{(2)}$ with $v_1^{(3)}$, $v_n^{(3)}$ with $v_n^{(4)}$, and $v_1^{(4)}$ with $v_1^{(1)}$. See Figure~\ref{fig_geom_ram}.
The resulting graph $H$ is Hamiltonian and thus there is only one planar straight-line drawing of $H$ on a given set of $4n-4$ points 
in convex position, up to rotation and mirroring.

\begin{figure}
\centering
 \includegraphics{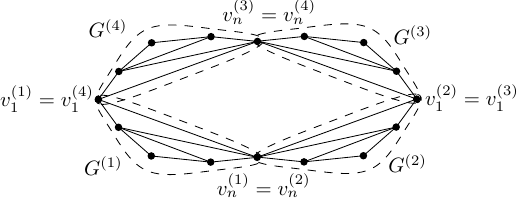} 
\caption{Construction of the graph $H$ in the proof of Theorem~\ref{theorem_relation_ord_convex_geom}.}
\label{fig_geom_ram}
\end{figure}

Let $K$ be a complete geometric graph whose vertices $u_1, u_2, \dots, u_N$ form, in this order, the vertices of a convex polygon.
In every noncrossing copy of $H$ in $K$, at least three of the graphs $G^{(i)}$, where $i \in [4]$, satisfy the property that the images of the vertices $v^{(i)}_1, \dots, v^{(i)}_n$ form a monotone sequence in the ordering $u_1 \prec u_2 \prec \dots \prec u_N$.
Consequently, in at least one $G^{(i)}$, the vertices form an increasing sequence.
If $N \ge \Rc(4n-4)$, every $2$-coloring of the complete convex geometric graph on $N$ vertices contains a monochromatic noncrossing copy of $H$. Therefore, every $2$-coloring of $\mathcal{K}_N$ contains a monochromatic copy of the ordered graph~$\mathcal{G}$.
\end{proof}

By the first inequality in Theorem~\ref{theorem_relation_ord_convex_geom}, the upper bound $\Rnoncross(n) \le n^{O(\log{n})}$ by Conlon et al.~\cite{conFox14} gives a quasipolynomial upper bound on $\Rc(n)$, improving the previous exponential bound (see, e.g.,~\cite{cibGao13}).

\section*{Acknowledgments}
The authors would like to thank to Ji\v{r}\'i Matou\v{s}ek for many helpful comments.
Part of the research was conducted during the DIMACS REU 2013 program.



\begin{thebibliography}{99}

\bibitem{aud12}
G. Audemard and L. Simon,
Glucose 2.3 in the SAT 2013 Competition, 
{\em Department of Computer Science Series of Publications B vol.\/}~\textbf{B-2013-1}, University of Helsinki (2013), 42--43.


\bibitem{bckk15}
M. Balko, J. Cibulka, K. Kr\'{a}l, and J. Kyn\v{c}l,
Ramsey numbers of ordered graphs,
{\em Electron. Notes Discrete Math.\/}~\textbf{49} (2015), 419--424.

\bibitem{bckkWeb}
M. Balko, J. Cibulka, K. Kr\'{a}l, and J. Kyn\v{c}l,
\href{http://kam.mff.cuni.cz/~balko/ordered_ramsey}
{http://kam.mff.cuni.cz/\allowbreak{\textasciitilde}balko/orde\-red\_ramsey}

\bibitem{boll04} 
B. Bollob\'{a}s,
{\em Extremal Graph Theory}, New York: Dover publications, ISBN 978-0-486-43596-1 (2004).

\bibitem{bondy73}
J. A. Bondy and P. Erd\H{o}s,
Ramsey numbers for cycles in graphs,
{\em J. Combin. Theory Ser. B\/}  \textbf{14}(1) (1973), 46--54. 

\bibitem{burr73}
S. A. Burr and J. A. Roberts, 
On Ramsey numbers for stars,
{\em Utilitas Math.\/} \textbf{4} (1973), 217--220. 

\bibitem{chartrand71}
G. Chartrand and S. Schuster, 
On the existence of specified cycles in complementary graphs,
{\em Bull. Amer. Math. Soc.\/} \textbf{77} (1971), 995--998.

\bibitem{choudum02}
S. A. Choudum and B. Ponnusamy,
Ordered Ramsey numbers,
{\em Discrete Math.\/} \textbf{247}(1--3) (2002), 79--92.

\bibitem{chung87}
F. R. K. Chung, F. T. Leighton, and A. L. Rosenberg,
Embedding graphs in books: a layout problem with applications to VLSI design,
{\em SIAM J. on Algebraic Discrete Methods\/} \textbf{8}(1) (1987), 33--58.

\bibitem{chvat72}
V. Chv\'atal and F. Harary,
Generalized Ramsey theory for graphs, II, Small diagonal numbers, 
{\em Proc. Amer. Math. Soc.\/} \textbf{32}(2) (1972), 389--394.

\bibitem{crst83}
V. Chv\'{a}tal, V. R\"{o}dl, E. Szemer\'{e}di, and W. T. Trotter Jr.,
The Ramsey number of a graph with bounded maximum degree,
{\em J. Combin. Theory Ser. B\/} \textbf{34}(3) (1983), 239--243.

\bibitem{cib13}
J. Cibulka,
Extremal combinatorics of matrices, sequences and sets of permutations,
Ph.D. Thesis, Charles University, Prague (2013).

\bibitem{cibGao13}
J. Cibulka,  P. Gao, M. Kr\v{c}\'al, T. Valla, and P. Valtr,
On the geometric Ramsey number of outerplanar graphs,
{\em Discrete Comput. Geom.} \textbf{53}(1) (2015), 64--79.

\bibitem{conlon09}
D. Conlon, 
A new upper bound for diagonal Ramsey numbers,
{\em Ann. of Math.} \textbf{170}(2) (2009), 941--960.

\bibitem{conFox14}
D. Conlon, J. Fox, C. Lee, and B. Sudakov,
Ordered Ramsey numbers,
{\em J. Combin. Theory Ser. B} \textbf{122} (2017), 353--383.

\bibitem{conlonFox09}
D. Conlon, J. Fox, and B. Sudakov,
Ramsey numbers of sparse hypergraphs,
{\em Random Structures Algorithms\/} \textbf{35}(1) (2009), 1--14.

\bibitem{conFox15}
D. Conlon, J. Fox, and B. Sudakov,
Recent developments in graph Ramsey theory, 
{\em Surveys in combinatorics 2015}, 49--118, 
London Math. Soc. Lecture Note Ser., 424, {\em Cambridge Univ. Press}, Cambridge, 2015. 

\bibitem{elias11}
M. Eli\'a\v{s} and J. Matou\v{s}ek,
Higher-order Erd\H{o}s--Szekeres theorems, 
{\em Adv. Math.} \textbf{244} (2013), 1--15.

\bibitem{erdos47}
P. Erd\H{o}s,
Some remarks on the theory of graphs,
{\em Bull. Amer. Math. Soc} \textbf{53}(4) (1947), 292--294.

\bibitem{ErSz35_a_comb_problem}
P. Erd\H{o}s and G. Szekeres,
A combinatorial problem in geometry,
{\em Compos. Math} \textbf{2} (1935), 463--470.

\bibitem{faudree74}
R. J. Faudree and R. H. Schelp,
All Ramsey numbers for cycles in graphs,
{\em Discrete Math.} \textbf{8}(4) (1974), 313--329.

\bibitem{fox12}
J. Fox, J. Pach, B. Sudakov, and A. Suk,
Erd\H{o}s--Szekeres-type theorems for monotone paths and convex bodies,
{\em Proc. London Math. Soc.} \textbf{105}(5) (2012), 953--982.

\bibitem{fur92}
Z. F\"{u}redi and P. Hajnal,
Davenport--Schinzel theory of matrices,
{\em Discrete Math.} \textbf{103}(3) (1992), 233--251.

\bibitem{gerencser67}
L. Gerencs\'er and L. Gy\'arf\'as,
On Ramsey-type problems,
{\em Ann. Univ. Sci. E\"{o}tv\"{o}s Sect. Math.} \textbf{10} (1967), 167--170.

\bibitem{GrNe02_Ramsey_Erdos}
R. Graham and J. Ne\v{s}et\v{r}il,
Ramsey theory and Paul Erd\H{o}s (recent results from a historical perspective),
{\em Paul Erd\H{o}s and his mathematics. II (Budapest, 1999)}, 339--365,
{\em Bolyai Soc. Math. Stud.} \textbf{11}, J\'anos Bolyai Math. Soc., Budapest (2002). 

\bibitem{hylten58}
C. Hylt\'{e}n-Cavallius,
On a combinatorial problem,
{\em Colloq. Math.} \textbf{6} (1958), 59--65.

\bibitem{kar97}
Gy. K\'arolyi, J. Pach, and G. T\'oth,
Ramsey-type results for geometric graphs, I,
{\em Discrete Comput. Geom.} \textbf{18}(3) (1997), 247--255.

\bibitem{kar98}
Gy. K\'arolyi, J. Pach, G. T\'oth, and P. Valtr,
Ramsey-type results for geometric graphs, II,
{\em Discrete Comput. Geom.} \textbf{20}(3) (1998), 375--388.

\bibitem{klazar04a}
M. Klazar,
Extremal problems for ordered (hyper)graphs: applications of Davenport--Schinzel sequences,
{\em European J. Combin.} \textbf{25}(1) (2004), 125--140.

\bibitem{klazar04b}
M. Klazar,
Extremal problems for ordered hypergraphs: small patterns and some enumeration,
{\em Discrete Appl. Math.} \textbf{143}(1--3) (2004), 144--154.

\bibitem{kovari54}
T. K\H{o}v\'{a}ri, V. S\'{o}s, and P. Tur\'{a}n,
On a problem of K. Zarankiewicz,
{\em Colloq. Math.}, \textbf{3} (1954), 50--57.

\bibitem{milans12}
K. G. Milans, D. Stolee, and D. B. West,
Ordered Ramsey theory and track representations of graphs, 
{\em J. Comb.}, \textbf{6}(4) (2015), 445--456.

\bibitem{moshkovitz12}
G. Moshkovitz and A. Shapira,
Ramsey theory, integer partitions and a new proof of the Erd\H{o}s--Szekeres theorem, 
{\em Adv. in Math.} \textbf{262} (2014), 1107--1129.

\bibitem{pach06}
J. Pach and G. Tardos,
Forbidden paths and cycles in ordered graphs and matrices,
{\em Israel J. Math.} \textbf{155}(1) (2006), 359--380.

\bibitem{rams30}
F. P. Ramsey,
On a Problem of Formal Logic,
{\em Proc. London Math. Soc.} \textbf{S2-30}(1) (1930), 264--286.

\bibitem{rosta73}
V. Rosta,
On a Ramsey-type problem of J. A. Bondy and P. Erd\H{o}s,
{\em J. Combin. Theory Ser. B\/} \textbf{15}(1) (1973), 94--120.

\bibitem{steele95}
J. M. Steele,
Variations on the monotone subsequence theme of Erd\H{o}s and Szekeres, in: 
{\em Discrete Probability and Algorithms (Minneapolis, MN, 1993)}, 111--131,
{\em IMA Vol. Math. Appl.} \textbf{72}, Springer, New York (1995).
\end{thebibliography}
\end{document}